\def\block(#1,#2)#3{\multicolumn{#2}{c}{\multirow{#1}{*}{$ #3 $}}}
\def\C{\mathcal{C}}
\def\D{\mathcal{D}}
\def\Z{\mathbb{Z}}
\def\Q{\mathbb{Q}}
\def\F{\mathbb{F}}
\def\N{\mathcal{N}}
\def\End{\mathrm{End}}
\def\Hom{\mathrm{Hom}}
\def\Ker{\mathrm{Ker}}
\def\O{\mathcal{O}}
\def\ind{\mathop{\mathrm{ind}}\nolimits}
\def\Vec{\mathrm{Vec}}
\def\sVec{\mathrm{sVec}}
\def\Ver{\mathrm{Ver}}
\def\k{\mathbf{k}}
\def\ev{\mathrm{ev}}
\def\coev{\mathrm{coev}}
\def\Rep{\mathop{\mathrm{Rep}}\nolimits}
\def\inv{\mathrm{inv}}
\def\coker{\mathop{\mathrm{coker}}\nolimits}
\def\length{\mathop{\mathrm{length}}\nolimits}
\def\gr{\mathop{\mathrm{gr}}\nolimits}
\begin{document}

\title{Hilbert Basis Theorem and Finite Generation of Invariants in Symmetric Fusion Categories in Positive Characteristic}

\author{Siddharth Venkatesh}

\begin{abstract} 

In this paper, we conjecture an extension of the Hilbert basis theorem and the finite generation of invariants to commutative algebras in symmetric finite tensor categories over fields of positive characteristic. We prove the conjecture in the case of semisimple categories and more generally in the case of categories with fiber functors to the characteristic $p > 0$ Verlinde category of $SL_{2}$.  We also construct a symmetric finite tensor category $\sVec_{2}$ over fields of characteristic $2$ and show that it is a candidate for the category of supervector spaces in this characteristic. We further show that $\sVec_{2}$ does not fiber over the characteristic $2$ Verlinde category of $SL_{2}$ and then prove the conjecture for any category fibered over $\sVec_{2}$.

\end{abstract}

\maketitle

\section{Introduction}

\subsection{} Let $\k$ be an algebraically closed field of characteristic $p \ge 0$. We begin by recalling some basic definitions. 

Recall that a \textit{monoidal category} is a category $\C$ equipped with a tensor product bifunctor $\otimes$ and a unit object $\mathbf{1}$ that satisfy certain associativity and unit axioms (see \cite[2.1]{EGNO} for more details.) A monoidal category is called \textit{rigid} if left and right duals exist for every object in $\C$ (see \cite[2.10]{EGNO}) and is called \textit{braided} if there exists a natural commutativity isomorphism 

$$c_{X, Y} : X \otimes Y \rightarrow Y \otimes X$$
satisfying certain compatibility conditions (see \cite[8.1]{EGNO}). If $c$ also satisfies 

$$c_{X, Y} \circ c_{Y, X} = \id_{Y \otimes X}$$
for each $X, Y \in \C$, then we say that the monoidal category $\C$ is \textit{symmetric} (\cite[9.9]{EGNO}).

Recall that a $\k$-linear abelian category $\C$ is said to be $\textit{locally finite}$ (\cite[1.8]{EGNO}) if the following two conditions are satisfied:

\begin{itemize}
\item[1.] For any two objects $X, Y \in \C$, the $\k$-vector space $\Hom_{\C}(X, Y)$ is finite dimensional.

\item[2.] Every object in $\C$ has finite length.

\end{itemize}
$\C$ is called \textit{finite} if, in addition to the above two properties, the following two conditions also hold:

\begin{enumerate}

\item[1.] $\C$ has enough projectives, i.e., every simple object in $\C$ has a projective cover.

\item[2.] There are finitely many isomorphism classes of simple objects in $\C$.

\end{enumerate}

\vspace{0.2cm}

\begin{definition} \label{tensor}

A \textit{tensor category} (see \cite[4.1]{EGNO}) over $\k$ is a locally finite $\k$-linear abelian rigid monoidal category $\C$ such that the bifunctor $\otimes : \C \times \C \rightarrow \C$ is bilinear on morphisms and such that $\End_{\C}(\mathbf{1}) \cong \k$. A \textit{fusion category} is a finite semisimple tensor category.

\end{definition}

A \textit{symmetric tensor category} is a tensor category $\C$ that is also a symmetric monoidal category. If $\C$ is additionally finite and semisimple as an abelian category, then we call $\C$ a \textit{symmetric fusion category}. When $\C$ is a symmetric tensor category, we will use $c$ to denote the commutativity isomorphism (the braiding) and $c_{X, Y}$ to denote specifically the commutativity isomorphism $X \otimes Y \rightarrow Y \otimes X$. 

For the benefit of the reader, we give some simple examples of tensor categories.

\begin{example}

\begin{enumerate}

\item[(a)] The simplest examples of symmetric tensor categories are $\Vec$ and $\sVec$ which are, respectively, the categories of finite dimensional $\k$-vector spaces and finite dimensional $\k$-vector superspaces. To define superspaces we assume $p \not= 2$. Here, the braiding is just the swap map and the signed swap map, respectively.

\item[(b)] Similarly, the category of representations over $\k$ of a finite group is a symmetric finite tensor category over $\k$ with braiding given by the swap map.

\item[(c)] A slightly more complicated category is the universal Verlinde category in characteristic $p$, which we denote as $\Ver_{p}$, which is constructed as a quotient of the category of finite dimensional representations of $\mathbb{Z}/p\mathbb{Z}$ over $\k$ of characteristic $p$. The full details regarding the construction is given in section 2.1 as the construction is slightly technical. This category will be extremely important in the sequel as it plays a central role in the statement of the main theorems of this paper.

\end{enumerate}

\end{example}

\vspace{0.2cm}

We next recall the notion of a symmetric tensor functor. A \textit{symmetric tensor functor} between symmetric tensor categories is a faithful, exact, additive monoidal functor that is compatible with the commutativitiy isomorphisms. In the special case when the target category is $\Vec$ (resp. $\sVec$, resp. $\Ver_{p}$) we call the functor a \textit{fiber functor} (resp. \textit{super fiber functor}, resp. \textit{Verlinde fiber functor}.) In addition, if there exists a symmetric tensor functor from $\C$ to $\D$, we will say that $\C$ is fibered over $\D$.

An example of a fiber functor is the forgetful functor $\Rep_{\k}(G) \rightarrow \k$. On the other hand, $\Ver_{p}$ is an example of a symmetric fusion category that does not admit a fiber or super fiber functor.

For any symmetric tensor category $\C$, there is always a canonical full symmetric tensor functor $\Vec \rightarrow \C$, defined by sending the one-dimensional vector space to the unit object in $\C$. As a result, we can identify the subcategory of $\C$ consisting of objects that are direct sums of copies of $\mathbf{1}$, with the category of vector spaces over the base field. As a general principle, we call such objects trivial and commonly identify $X = V \otimes 1$, with the vector space $V =  \Hom_{\C}(1, X)$. 

\vspace{0.5cm}

\subsection{}

Deligne, in \cite{De}, introduced the notion of subexponential growth for a symmetric tensor category (though he does not explicitly use this term). We say that $\C$ has \textit{subexponential growth} if for any object $X$, there exists a positive number $c_{X}$ such that the $\length(X^{\otimes n}) \le c_{X}^{n}$ for all $n \ge 0.$ Deligne then proved that a symmetric tensor category $\C$ in characteristic 0 has subexponential growth if and only if it admits a super fiber functor (see \cite{De, EGNO, O} for more details.)

As a consequence of Deligne's theorem, every symmetric tensor category $\C$ of subexponential growth in characteristic zero has the Hilbert basis property, i.e., every finitely generated commutative algebra $A$ in $\C^{\ind}$ is Noetherian and for any object $X \in \C$, $A \otimes X$ is a Noetherian $A$-module (hence, so is every quotient). Additionally, in the case of a fusion category, Deligne's theorem implies that $\C = \Rep(G, u)$, the $\Z/2\Z$-graded representation category of a finite group $G$ with parity operator given by a fixed central element $u$ of order $2$ in $G$. Hence, by standard invariant theory arguments, for any finitely generated commutative ind-algebra $A$, the ring of invariants of $A$ is finitely generated and $A$ is finitely generated as a module over its invariants.

Recently, in \cite{O}, Victor Ostrik proved an extension of Deligne's theorem to symmetric fusion categories in characteristic $p > 0$. More precisely, he proved that any symmetric fusion category in characteristic $p > 0$ admits a Verlinde fiber functor to $\Ver_{p}$. The main goal of this paper is to apply these results of Ostrik to the Hilbert basis problem and the problem of finite generation of invariants. 

\vspace{0.5cm}

\subsection{}

We now turn to the results of this paper. Recall first that in any symmetric tensor category, we have a notion of a commutative algebra, which is an object equipped with an associative unital multiplication morphism that is invariant under the braiding. Similarly, we also have a notion of a module over a commutative algebra. Henceforth, fix a symmetric finite tensor category $\C$ over a field $\k$ of characteristic $p > 0$, with braiding $c$. Leaving aside the technical definitions of finite generation, Noetherianity and ind-completions $\C^{ind}$ till the end of section 2.1, our first main result is as follows.

\begin{theorem} \label{Hilb2} If $\C$ admits a Verlinde fiber functor, then every finitely generated commutative ind-algebra $A \in \C^{\ind}$ is Noetherian as an algebra.
\end{theorem}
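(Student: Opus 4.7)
The plan is to use the Verlinde fiber functor $F : \C \to \Ver_p$ to pull back a Hilbert basis theorem for $\Ver_p^{\ind}$ — which I expect to be proved separately in the paper via direct analysis of $\Ver_p$ — to one for $\C^{\ind}$. The argument has two ingredients: a reduction to symmetric algebras, and a descent lemma transporting Noetherianity through a faithful exact symmetric tensor functor.

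First I would reduce the claim to showing that for every $X \in \C$, the symmetric algebra $S(X) \in \C^{\ind}$ is Noetherian. By the expected notion of finite generation, a finitely generated commutative ind-algebra $A$ admits a surjection $S(X) \twoheadrightarrow A$ for some $X \in \C$; pulling back ideals gives an order-preserving injection from the ideals of $A$ into the ideals of $S(X)$ containing the kernel, so an ACC on ideals of $S(X)$ forces an ACC on ideals of $A$. Next I would establish the following descent lemma: if $F : \C^{\ind} \to \D^{\ind}$ is a faithful exact symmetric monoidal functor and $B \in \C^{\ind}$ is a commutative algebra with $F(B)$ Noetherian in $\D^{\ind}$, then $B$ is Noetherian. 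Given an ascending chain $I_1 \subseteq I_2 \subseteq \cdots$ of ideals in $B$, exactness of $F$ yields an ascending chain $F(I_n) \subseteq F(B)$ of $F(B)$-ideals, which stabilizes at some $N$ by hypothesis. For $n \geq N$ the cokernel $I_n / I_N$ is sent to $0$ by $F$, and a faithful exact functor reflects the zero object (since $F(\id_{I_n/I_N}) = 0$ forces $\id_{I_n/I_N} = 0$), so $I_n = I_N$. Applying this lemma to $B = S(X)$, using $F(S(X)) \cong S(F(X))$ because $F$ is symmetric monoidal and $S$ is defined categorically, reduces the problem to Noetherianity of $S(V)$ in $\Ver_p^{\ind}$ for every $V \in \Ver_p$.

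The main obstacle is this reduced statement about $\Ver_p$. It is the substantive content that the fiber-functor formalism merely transports, and it must contend with the well-known pathologies of $S(V)$ in positive characteristic (notably the phenomena that motivate the introduction of $\sVec_2$ at $p = 2$ in this paper). I would expect the argument to proceed by induction on the highest simple appearing in $V$ in the standard indexing of simples of $\Ver_p$, using the tilting/fusion structure of $\Ver_p$ to decompose $S(V)$ into iterated extensions built from symmetric algebras on simpler objects, with the base case reducing to the classical Hilbert basis theorem applied to the isotypic component of the unit object $\mathbf{1}$.
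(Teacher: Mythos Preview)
Your overall architecture---prove the result for $\Ver_p$, then descend along the fiber functor---matches the paper exactly, and your descent argument is morally the same as the paper's. But there is a genuine gap stemming from the paper's definition of ``Noetherian algebra'': here $A$ is Noetherian as an algebra means that \emph{every finitely generated $A$-module} is Noetherian, and the paper explicitly remarks (right after Definition~\ref{Noetherian}) that it is not known in this generality whether ACC on ideals (i.e.\ $A$ Noetherian as a module over itself) implies this stronger property. Your reduction step and your descent lemma both only track chains of ideals, so as written they only establish the weaker statement. The fix is immediate and is exactly what the paper does in Section~3.3: run the descent argument for an arbitrary finitely generated $A$-module $M$ and an ascending chain of $A$-submodules $M_0 \subseteq M_1 \subseteq \cdots$; since $F(M)$ is a finitely generated $F(A)$-module and $F(A)$ is Noetherian in the strong sense, the chain $F(M_n)$ stabilizes, and faithful exactness of $F$ forces the original chain to stabilize. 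Similarly, the reduction to $S(X)$ should be phrased as ``every finitely generated $A$-module is a finitely generated $S(X)$-module,'' not via pulling back ideals.

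On the $\Ver_p$ side, your proposed induction on the highest simple is vaguer than necessary and misses the key structural fact the paper exploits. The paper shows something much stronger than Noetherianity of $S(L_i)$ for $i>1$: it shows $S(L_i)$ is an \emph{actual object} of $\Ver_p$ (finite length), because $S^{p-i+1}(L_i)=0$. This is seen by realizing $L_i$ as the image of the standard module of $SL_i$ under the fiber functor $\Ver_p(SL_i)\to\Ver_p$ and computing $\dim S^{p-i+1}(V)=\binom{p}{i-1}=0$, so that symmetric power is negligible. Hence $S(X)\cong \k[x_1,\dots,x_r]\otimes Y$ with $Y\in\Ver_p$ of finite length, and a categorical version of the classical Hilbert basis theorem (Proposition~\ref{poly}: tensoring a Noetherian module with a polynomial ring preserves Noetherianity) finishes the job. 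No induction on simples is needed; the nontrivial simples contribute only a finite-length factor.
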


If $\C$ is any category for which the conclusion of Theorem \ref{Hilb2} is true, then we say that $\C$ has the \textit{Hilbert basis property}. In particular, Ostrik's main result in \cite{O} gives us the following corollary of the above theorem.

\begin{corollary} \label{Hilb3} Symmetric fusion categories have the Hilbert basis property. 
\end{corollary}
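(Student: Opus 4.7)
The plan is to derive Corollary \ref{Hilb3} as an immediate combination of two inputs already made available in the excerpt: Ostrik's fibration theorem from \cite{O} and Theorem \ref{Hilb2} above. Specifically, let $\C$ be an arbitrary symmetric fusion category over $\k$ of characteristic $p > 0$. By Ostrik's main result in \cite{O}, there exists a symmetric tensor functor $F : \C \to \Ver_p$, i.e., a Verlinde fiber functor on $\C$. This places $\C$ squarely within the hypothesis of Theorem \ref{Hilb2}, and so $\C$ has the Hilbert basis property: every finitely generated commutative ind-algebra $A \in \C^{\ind}$ is Noetherian as an algebra.

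Because essentially all the work has been packaged into the two cited theorems, my proposal is to write the proof as a two-line deduction rather than to reprove either input. The only nontrivial verification point I would flag is that a symmetric fusion category in characteristic $p > 0$ is, in particular, a symmetric finite tensor category in the sense of Definition \ref{tensor}: it is locally finite, rigid, abelian, $\k$-linear, with $\End_\C(\mathbf{1}) \cong \k$, has finitely many simples, and, being semisimple, trivially has enough projectives (every object is projective). This confirms that the standing hypothesis of Theorem \ref{Hilb2}, namely that $\C$ is a symmetric finite tensor category over a field of characteristic $p > 0$, is indeed in force.

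There is no real main obstacle here, because the two obstacles, namely proving that one has a Verlinde fiber functor and proving that having a Verlinde fiber functor implies the Hilbert basis property, are precisely the content of \cite{O} and of Theorem \ref{Hilb2} respectively. The latter is presumably the bulk of this paper, and I would prove the corollary entirely by invoking these results in sequence; no extra argument is needed.
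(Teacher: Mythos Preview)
Your proposal is correct and matches the paper's own approach exactly: the corollary is stated as an immediate consequence of Ostrik's main result in \cite{O} combined with Theorem \ref{Hilb2}, with no further argument supplied. Your additional sanity check that a symmetric fusion category is in particular a symmetric finite tensor category is accurate and harmless, though the paper does not bother to spell it out.
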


\vspace{0.2cm}

Next, recall the definition of the Chevalley property. We say that a tensor category has the Chevalley property if the tensor product of simple objects is semisimple. Using Corollary \ref{Hilb3}, we prove the following result:

\begin{corollary} \label{Hilb4} Suppose $\C$ is a symmetric tensor category fibered over a symmetric tensor category with finitely many isomorphism classes of simple objects, in which the Chevalley property holds. Then, $\C$ has the Hilbert basis property. 
\end{corollary}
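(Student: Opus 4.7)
The plan is a two-step reduction: first reduce the problem in $\C$ to an analogous problem in $\D$ via the given fiber functor, and then reduce the problem in $\D$ to Corollary \ref{Hilb3} by exploiting the Chevalley property to find a symmetric fusion subcategory of $\D$ together with a well-behaved filtration on any ind-algebra whose associated graded lies in that subcategory.

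For the first reduction, let $F : \C \to \D$ be the given symmetric tensor functor and let $A$ be a finitely generated commutative ind-algebra in $\C^{\ind}$. Then $F(A) \in \D^{\ind}$ is again a finitely generated commutative ind-algebra (since $F$ extends to ind-completions and preserves the symmetric tensor structure), and because $F$ is faithful and exact it reflects strict inclusions of ideals: a strict inclusion $I \subsetneq J$ in $A$ yields $F(J/I) = F(J)/F(I) \ne 0$, hence $F(I) \subsetneq F(J)$. Consequently, Noetherianity of $F(A)$ in $\D^{\ind}$ forces Noetherianity of $A$ in $\C^{\ind}$, reducing the question to establishing the Hilbert basis property for $\D$.

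Working now in $\D$, the full subcategory $\D^{ss}$ of semisimple objects is closed under $\otimes$ precisely by the Chevalley property, and closed under direct sums, summands, and duals automatically; combined with the finiteness of the number of isomorphism classes of simple objects, this makes $\D^{ss}$ a symmetric fusion category, to which Corollary \ref{Hilb3} applies. To transfer from $\D^{ss}$ to $\D$, I would equip any finitely generated commutative ind-algebra $A \in \D^{\ind}$, generated by some $V \in \D$, with its socle filtration $\soc^n A$. The key technical input, which I would derive from the Chevalley property, is subadditivity of Loewy length: $\mathrm{ll}(X \otimes Y) \le \mathrm{ll}(X) + \mathrm{ll}(Y) - 1$ for nonzero $X, Y \in \D$. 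This follows from the filtration $H_j = \sum_{k+l = j} \soc^k X \otimes \soc^l Y$ on $X \otimes Y$, whose successive quotients are tensor products of semisimples, hence semisimple by Chevalley.

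Subadditivity then yields $\soc^m A \cdot \soc^n A \subseteq \soc^{m+n-1} A$, so after a harmless reindex the socle filtration on $A$ is multiplicative and the associated graded $\gr A$ is a commutative graded ind-algebra whose underlying ind-object lies in $(\D^{ss})^{\ind}$; it is generated by the image of $V$, hence finitely generated over $\D^{ss}$. By the Hilbert basis property of $\D^{ss}$ established above, $\gr A$ is Noetherian. A standard filtered-to-graded lifting—a strictly ascending chain of ideals $I_1 \subsetneq I_2 \subsetneq \cdots$ in $A$ induces a strictly ascending chain of graded ideals $\gr I_1 \subsetneq \gr I_2 \subsetneq \cdots$ in $\gr A$, since each ideal $I \subseteq A$ is recovered from $\gr I$ by inducting on the exhaustive filtration $I \cap \soc^n A$—then promotes Noetherianity of $\gr A$ to Noetherianity of $A$. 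The main obstacle lies in the second reduction: carefully proving subadditivity of Loewy length from Chevalley, verifying multiplicativity of the socle filtration for commutative ind-algebras, and handling the ind-categorical subtleties of the filtered-to-graded lifting in a setting where the socle filtration is infinite.
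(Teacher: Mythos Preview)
Your outline follows the same two-step reduction as the paper: pass from $\C$ to the base category $\D$ along the faithful exact tensor functor, then inside $\D$ use the Chevalley property to get the fusion subcategory $\D^{ss}$ and lift Noetherianity from a semisimple associated graded via Corollary~\ref{Hilb3}. The skeleton is correct.

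There are two points where your execution deviates from the paper's and where one of them is a genuine gap. First, the paper does not put the socle filtration on $A$ itself; instead it reduces to $A = S(X)$, puts the socle filtration on the finite-length generator $X$, and equips $S(X)$ with the \emph{induced} tensor-product filtration. The payoff is immediate: there is a canonical epimorphism $S(\gr X) \twoheadrightarrow \gr S(X)$, so finite generation of the associated graded is automatic. With your choice of the socle filtration on $A$, the claim that $\gr A$ is generated by the image of $V$ is not obvious and would need its own argument (the induced filtration and the socle filtration on $A$ need not coincide, only $F_k \subseteq \soc^k A$). Your subadditivity-of-Loewy-length computation is precisely the content of why the paper's induced filtration has semisimple subquotients, so the underlying mechanism is the same; the paper's packaging just sidesteps the finite-generation issue.

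Second, and more seriously, recall that in this paper ``Noetherian algebra'' (Definition~\ref{Noetherian}) means that \emph{every finitely generated module} is Noetherian, and the paper explicitly remarks that it is not known whether Noetherianity of $A$ over itself suffices. Your filtered-to-graded lifting only treats ascending chains of ideals in $A$, so it establishes at most that $A$ is Noetherian as a module over itself. The paper closes this gap by also filtering $S(X)\otimes Y$ for arbitrary $Y\in\D$, obtaining an epimorphism $\gr S(X)\otimes Y \twoheadrightarrow \gr(S(X)\otimes Y)$, and invoking Noetherianity of $\gr S(X)$ (as an algebra in $\D^{ss}$, hence with $\gr S(X)\otimes Y$ a finitely generated module) to conclude. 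You should do the same: run your filtered-to-graded argument on the module $A\otimes Y$, not just on $A$.
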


\begin{remark} 

The above corollaries show that the Hilbert basis property holds in a very general setting, which may lead one to think that it should hold for arbitrary symmetric tensor categories. To show that this is not the case, we give an example of a category for which the Hilbert basis property fails. In particular, this category will contain infinitely many isomorphism classes of simple objects, which shows that the finiteness assumption in Corollary \ref{Hilb4} is essential. 

Consider the Deligne category $\Rep S_{t}$ (see \cite{E} for details) associated to the symmetric group at $t \notin \Z_{\ge 0}$. This category is a semisimple symmetric tensor category but there are infinitely many isomorphism classes of simple objects. These isomorphism classes are correspond bijectively to partitions (of arbitrary size) and are hence labeled by Young diagrams. Let $X$ be the simple object corresponding to the one box diagram. This object can be thought of as an interpolation of the reflection representation for $S_{n}$ as $n$ grows large (\cite[2.2]{E}). One can check that $S(X)^{\inv}$ is isomorphic to the polynomial algebra in infinitely many variables and is hence not Noetherian as a module over itself, and thus is certainly not Noetherian as an algebra (we omit the proof as we do not use this result in the sequel). Thus, the Hilbert basis property fails for $\Rep S_{t}$ if $t \notin \Z$.

\end{remark}

\vspace{0.5cm}

\subsection{} We next turn to the problem of invariants. Again, leaving aside the formal definition of invariants of ind-objects in symmetric tensor categories to the end of section 2.1, we just note that, as in the classical setting, the invariants of any commutative ind-algebra in a symmetric tensor category form a commutative ind-subalgebra. This brings us to our next main result of the paper.

\begin{theorem} \label{Inv} Suppose $\C$ is a symmetric finite tensor category that admits a Verlinde fiber functor $F : \C \rightarrow \Ver_{p}$. Let $A \in \C^{\ind}$ be a finitely generated commutative algebra and let $A^{\inv}$ be its invariant subalgebra. Then, $A^{\inv}$ is finitely generated and $A$ is a finitely generated $A^{\inv}$-module.
\end{theorem}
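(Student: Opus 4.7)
The plan is to reduce the theorem to the Verlinde case $\C = \Ver_p$ via the fiber functor $F$, and to prove the Verlinde case by adapting the classical Hilbert--Noether proof of finite generation of invariants under reductive group actions.

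First, Theorem~\ref{Hilb2} gives that $A$ is Noetherian. A standard Artin--Tate argument then reduces the theorem to showing that $A$ is finitely generated as an $A^{\inv}$-module: together with finite generation of $A$ as a $\k$-algebra, this forces $A^{\inv}$ to be a finitely generated $\k$-algebra. So the key task is to produce a finite generating set for $A$ over $A^{\inv}$.

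For the case $\C = \Ver_p$, semisimplicity of $\Ver_p$ gives a canonical splitting of any ind-object into its trivial isotypic component and a complement, and since $\mathbf{1}$ is projective the invariants functor $\Hom(\mathbf{1},-)$ is exact. Applied to a commutative ind-algebra $B$, this splitting yields a Reynolds retraction $R : B \to B^{\inv}$ that is $B^{\inv}$-linear, since multiplication by the trivial isotypic preserves the isotypic decomposition. Noetherianity then makes the ideal $B \cdot B^{\inv}_{+}$ finitely generated (after reducing to $B$ graded, e.g., by first handling the case $B = S(X)$ and quotienting using exactness of invariants), and $R$-averaging propagates these generators down to $B^{\inv}$ itself in the classical way. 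A categorical Cayley--Hamilton construction, producing for each generator of $B$ a monic polynomial over $B^{\inv}$ that it satisfies, then exhibits $B$ as finite over $B^{\inv}$.

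Finally, I would transfer back to general $\C$ by applying $F$ to $A$: the Noether-type polynomials produced in $\Ver_p$ are built purely from the symmetric monoidal structure, so they are images under $F$ of analogous morphisms in $\C$ whose coefficients already lie in $A^{\inv}$ before $F$ is applied. Faithfulness of $F$ then lifts the integral dependence from $F(A)$ to $A$, yielding the required finite list of module generators. The main obstacle is precisely this descent step: the inclusion $A^{\inv} \hookrightarrow F(A)^{\inv}$ is generally proper, so integral relations written using arbitrary elements of $F(A)^{\inv}$ need not descend to $A$. One must therefore construct the Noether polynomials in a manner intrinsic to $\C$, using the symmetric tensor structure of $F$ rather than just its exactness and faithfulness, so that their coefficients automatically belong to $A^{\inv}$ and the lift along $F$ is canonical.
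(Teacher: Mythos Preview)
Your proposal has a genuine gap precisely at the point you yourself flag. You correctly observe that $F(A^{\inv})\subsetneq F(A)^{\inv}$ in general, so integrality relations over $F(A)^{\inv}$ need not descend. Your remedy is to assert that the ``Noether polynomials'' can be built from the symmetric monoidal structure alone and hence live in $\C$ with coefficients in $A^{\inv}$; but you give no such construction. In the classical Noether argument the coefficients are the elementary symmetric functions of an orbit under a finite group, and there is no evident analog of ``orbit'' for a commutative Hopf algebra object $H$ in $\Ver_p$. Without this, the sentence ``so they are images under $F$ of analogous morphisms in $\C$'' is an assumption of exactly what has to be proved.

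The paper closes this gap by an entirely different mechanism that is specific to characteristic $p$: a Frobenius-type argument. Writing $H=\bigoplus H_i\otimes L_i$ and $F(A)=\bigoplus A_i\otimes L_i$, one uses the key fact (from Proposition~\ref{Ver:finite}) that $S^p(L_i)=0$ for $i>1$ to show that the $p$th-power map kills all nontrivial isotypic contributions. Consequently $H_1^p$ is an ordinary commutative finite Hopf algebra in $\Vec$, and $A_1^p$ is an $H_1^p$-comodule algebra whose invariants $A'$ sit inside $F(A^{\inv})$ by construction. The classical theorem of Demazure--Gabriel then gives $A_1^p$ finite over $A'$; since $A_1$ is integral over $A_1^p$ and $F(A)$ is finite over $A_1$ by the $\Ver_p$ case, one obtains $F(A)$ finite over a finitely generated subalgebra of $F(A^{\inv})$, and faithfulness of $F$ finishes. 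The essential idea you are missing is this Frobenius reduction to $\Vec$; your proposed ``categorical Cayley--Hamilton'' would have to replicate its effect, and you have not indicated how.

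A secondary remark: even in the base case $\C=\Ver_p$, the paper does not use a Reynolds/Cayley--Hamilton argument to get finiteness of $A$ over $A^{\inv}$. It instead exploits the structural result $S(X)\cong \k[x_1,\dots,x_n]\otimes Y$ with $Y$ of finite length, from which both finite generation of $S(X)^{\inv}$ and finiteness of $S(X)$ over $\k[x_1,\dots,x_n]\subseteq S(X)^{\inv}$ are immediate. Your Reynolds-operator portion does recover finite generation of invariants in the fusion case (and matches Section~4 of the paper), but it does not by itself yield finiteness of $A$ over $A^{\inv}$, which is where the Frobenius step is needed.
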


Following Ostrik's result, this theorem has the following corollary.

\begin{corollary} Let $\C$ be a symmetric fusion category over $\k$ of characteristic $p$. Let $A$ be a finitely generated commutative algebra in $\C^{\ind}$. Then, $A^{\inv}$ is finitely generated and $A$ is a finitely generated $A^{\inv}$-module.
\end{corollary}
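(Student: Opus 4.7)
The plan is to derive this corollary as an immediate consequence of Theorem \ref{Inv} together with Ostrik's theorem from \cite{O}, which supplies the missing Verlinde fiber functor. Ostrik's main result states that every symmetric fusion category $\C$ in characteristic $p > 0$ admits a Verlinde fiber functor $F : \C \to \Ver_p$, so all the hypotheses of Theorem \ref{Inv} are automatically present for any symmetric fusion category once one invokes his result.

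First I would verify that any symmetric fusion category $\C$ is in particular a symmetric finite tensor category in the sense of Definition \ref{tensor}: by definition it is $\k$-linear, abelian, rigid, monoidal, symmetric, satisfies $\End_\C(\mathbf{1}) \cong \k$, has finitely many simple objects, and is semisimple (hence a fortiori has enough projectives). Thus $\C$ fits the framework of Theorem \ref{Inv}. Next I would invoke \cite{O} to produce a Verlinde fiber functor $F : \C \to \Ver_p$.

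Finally I would apply Theorem \ref{Inv} to the pair $(\C, F)$ and to the given finitely generated commutative ind-algebra $A \in \C^{\ind}$. The conclusion of that theorem states precisely that $A^{\inv}$ is finitely generated and that $A$ is a finitely generated module over $A^{\inv}$, which is the statement of the corollary verbatim. No further argument is required.

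The only real obstacle is not internal to this corollary but external: one must rely on two substantial inputs, namely Theorem \ref{Inv} itself (whose proof occupies the bulk of the present paper) and Ostrik's theorem \cite{O} (a deep result on the structure of symmetric fusion categories in positive characteristic). Given both of these as black boxes, the corollary reduces to a one-line deduction of the form \emph{Ostrik + Theorem \ref{Inv} $\Rightarrow$ conclusion}.
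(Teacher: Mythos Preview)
Your proposal is correct and matches the paper's approach exactly: the paper states this corollary immediately after Theorem \ref{Inv} with the sentence ``Following Ostrik's result, this theorem has the following corollary,'' i.e., it is obtained by combining Ostrik's theorem (existence of a Verlinde fiber functor for any symmetric fusion category in characteristic $p$) with Theorem \ref{Inv}, precisely as you describe.
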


\begin{remark} Note that a statement in characteristic 0 analogous to the theorem above fails to be true. More precisely, there exists a category $\C$ fibered over $\sVec$ and a finitely generated commutative ind-algebra $A \in \C$ such that $A^{\inv}$ is not finitely generated. 

Let $\C$ be the category of complex representations of the $1$-dimensional purely odd Lie superalgebra, which is the category of super vector spaces

$$V = V_{+} \oplus V_{-}$$
with an odd linear map $D : V \rightarrow V$ that squares to $0$. Let $V$ be the representation  with basis $\{x, y, z\}$ $x$ even and $y, z$ odd and $D$ defined by sending $x$ to $y$ and $y, z$ to $0$. Let $A$ be the symmetric algebra of $V$ in $\C^{\ind}$. Then,

$$A = \mathbb{C}[x] \otimes \bigwedge(y, z).$$
Now, $A^{\inv} = \Ker(D)_{\text{even}}$ has basis over $\mathbb{C}$ given by $\{1, x^{n}yz : n  \ge 1\}$ and is hence not finitely generated. Additionally, $A$ is not finitely generated as a module over $A^{\inv}$. 

The reason this example fails to give a counterexample to the theorem in characteristic $p > 0$ is because in characteristic $p$, $x^{p}$ is also in $A^{\inv}$. Thus, $A^{\inv}$ becomes finitely generated and $A$ becomes a finitely generated module over $A^{\inv}$.

\end{remark}

\vspace{0.2cm}

The main strategy to proving Theorem \ref{Hilb2} is to prove Theorem \ref{Hilb2} for $\Ver_{p}$ and then to lift these properties along the Verlinde fiber functor. We then use this theorem, along with the construction of a Reynolds' operator, i.e., an $A^{\inv}$-module projection from $A$ onto $A^{\inv}$, to prove finite generation of invariants for fusion categories. 

Following this result, we turn to the question of finiteness of an algebra over its invariants and prove this using an analog of the Frobenius homomorphism in $\Ver_{p}$. This approach works uniformly for semisimple and non-semisimple categories. We then use this result to prove finite generation of invariants in an arbitrary tensor category, which also gives a different proof of the result in the fusion setting.

\begin{remark} 

In this paper, the above results are proved only in the case of tensor categories over algebraically closed fields. There are straightforward generalizations of these results to non-algebraically closed fields. However, a discussion of the generalized results would require a discussion of tensor categories over non-algebraically closed fields, which is outside the scope of the paper. 

\end{remark}

\vspace{0.5cm}

\subsection{}

After proving the above theorems for categories fibered over $\Ver_{p}$, we consider the case (in characteristic 2) of a category that does not admit a fiber functor to $\Ver_{2}$ (which is just $\Vec$ in characteristic $2$). Let $\k$ now be any field of characteristic $2$, not necessarily algebraically closed. Consider the commutative $\k$-algebra $D = \k[d]/(d^{2})$. This acquires the structure of a Hopf algebra with primitive $d$ (characteristic $2$ is required for the comultiplication to preserve the relation $d^{2} = 0$.)

We can give this Hopf algebra a triangular structure (see \cite[8.3]{EGNO}) via the $R$-matrix

$$R := 1 \otimes 1 + d \otimes d.$$
Then, $\Rep D$, with the above triangular structure, acquires the structure of a symmetric finite tensor category. 

There are two reasons this category is particularly interesting. First, $\Rep D$ does not fiber over $\Ver_{2}$ (which is just the category of $\k$-vector spaces). Hence not only is it not covered by the previous theorems in this paper but it is also a counterexample in characteristic $2$ to Conjecture 1.3 in \cite{O}. The second reason is that over fields of characteristic $2$, the ordinary definition of supervector spaces does not make sense as parity has no meaning. Instead, we can actually view $\Rep D$ as the correct category of supervector spaces over $\k$, as it is a nonsemisimple reduction of the category of supervector spaces over an ramified extension of the $2$-adics. Both the statements above will be elaborated upon and rigorously proved in section 2.3.

\vspace{0.2cm}

Let $\k$ now be an algebraically closed field of characteristic $2$. In section 2.3, we show that there is only one simple in $\Rep D$, the unit object $\mathbf{1}$. Hence, $\Rep D$ has the Chevalley property. Thus, by Corollary \ref{Hilb4}, if $\C$ is a symmetric finite tensor category fibered over $\Rep D$, then $\C$ has the Hilbert basis property. Our final goal of the paper is to show that an analog of Theorem \ref{Inv} also holds for such $\C$. Before stating the result, we introduce some notation, by using $\sVec_{2}$ to denote $\Rep D$ (justified by the paragraph above and section 2.3). The following theorem now holds.

\begin{theorem} \label{Inv2} Suppose $\C$ is a symmetric finite tensor category fibered over $\sVec_{2}$ and let $A$ be a finitely generated commutative ind-algebra in $\C$. Then, $A^{\inv}$ is finitely generated and $A$ is finite as a module over $A^{\inv}$.
\end{theorem}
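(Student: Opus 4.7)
The plan is to imitate the two-step strategy used for the Verlinde case of Theorem \ref{Inv}: first establish the statement for $\C = \sVec_2$ itself, and then lift it to a general $\C$ along the fiber functor $F : \C \to \sVec_2$, using the Hilbert basis property for $\C$ supplied by Corollary \ref{Hilb4}.

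For the base case, a commutative algebra $B$ in $\sVec_2^{\ind}$ is an ordinary associative $\k$-algebra equipped with a square-zero derivation $d$, and braided commutativity combined with $\ch \k = 2$ translates into
\[
ab + ba = d(a) d(b) \qquad \text{for all } a, b \in B.
\]
Two structural identities drive the proof. First, $B^{\inv} = \ker d$ is central in $B$: for $a \in \ker d$ the relation reads $ab + ba = 0$. Second, every $a \in B$ satisfies $a^2 \in B^{\inv}$: applying the displayed relation to the pair $(a, d(a))$ gives $a\, d(a) = d(a)\, a$, whence $d(a^2) = 2 a \, d(a) = 0$, a Frobenius-type phenomenon specific to characteristic $2$. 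If $a_1, \dots, a_n$ generate $B$ as a $\k$-algebra, then the commutators $[a_i, a_j] = d(a_i) d(a_j)$ lie in the central subring $B^{\inv}$ and each $a_i^2$ lies in $B^{\inv}$, so a reordering argument shows that $B$ is spanned as a $B^{\inv}$-module by the $2^n$ ordered squarefree monomials $\prod_{i \in I} a_i$, $I \subset \{1, \dots, n\}$. Hence $B$ is finite over $B^{\inv}$, and Artin--Tate applied to the tower $\k \subset B^{\inv} \subset B$ of commutative rings yields that $B^{\inv}$ is a finitely generated $\k$-algebra.

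For the lifting step, let $A$ be a finitely generated commutative ind-algebra in $\C$ with algebra generators contained in a finite-length subobject $X \subset A$, and set $N = \length(X)$. Mimicking the base case inside $\C$, construct $M \subset A$ as the image of the map $A^{\inv} \otimes \bigoplus_{k=0}^{N} X^{\otimes k} \to A$ obtained from iterated multiplication. Applying $F$, the base case analysis on $F(A)$ shows $F(M) = F(A)$, so the cokernel of $M \hookrightarrow A$ is annihilated by $F$; by faithful exactness of $F$ this cokernel vanishes, proving $A$ is finite over $A^{\inv}$. A further application of Artin--Tate, legitimate because $A$ is Noetherian by Corollary \ref{Hilb4} and the centrality of $A^{\inv}$ descends intact from $F(A)$, gives finite generation of $A^{\inv}$ as a $\k$-algebra.

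The main obstacle is that $F$ need not preserve invariants: the inclusion $A^{\inv} \hookrightarrow F(A)^{\inv}$ is typically strict, as already for the forgetful functor $\Rep G \to \Vec$, so one cannot naively pull algebra generators of $F(A)^{\inv}$ back to $A^{\inv}$. The way around this is to phrase module-finiteness purely in terms of subobjects of $A$ constructed from generators of $A$ itself and verify surjectivity only after applying $F$, while the Artin--Tate step needs only commutativity of $A^{\inv}$ and Noetherianity of $A$, both of which are available internally in $\C$.
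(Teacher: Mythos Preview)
Your base case for $\C=\sVec_2$ is correct and in fact tidier than what the paper does there: once $a^2\in\ker d=B^{\inv}$ for each generator and $[a_i,a_j]=d(a_i)d(a_j)\in B^{\inv}$, a straightening argument gives module-finiteness of $B$ over the central subalgebra $B^{\inv}$, and Artin--Tate (legitimate since $B^{\inv}$ is commutative and central in the possibly noncommutative $B$) yields finite generation of $B^{\inv}$.

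The lifting step, however, has a genuine gap at the sentence ``the base case analysis on $F(A)$ shows $F(M)=F(A)$.'' Your $F(M)$ is the $F(A^{\inv})$-span of words of bounded length in $F(X)$, whereas the base case only produces generation of $F(A)$ over $F(A)^{\inv}=\ker d_{F(A)}$; the inclusion $F(A^{\inv})\subset F(A)^{\inv}$ is precisely the obstacle you name, and your proposed workaround does not close it. Concretely, take $\C=\Rep_\k(G)$ for a finite group $G$, fibered over $\sVec_2$ via the forgetful functor followed by $\Vec\hookrightarrow\sVec_2$. Then $d=0$ on $F(A)$, so $F(A)^{\inv}=F(A)$ and the base case is vacuous, while $F(A^{\inv})=A^G$; your claimed equality $F(M)=F(A)$ is then the bare assertion that $A$ is finite over $A^G$, which is what was to be proved. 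The paper's repair is to manufacture a large subalgebra of $A^{\inv}$ directly: unlike $a\mapsto a^2$, the map $a\mapsto a^4$ is an \emph{additive} as well as multiplicative ring endomorphism of $F(A)$, and crucially it \emph{intertwines the $H$-coaction} (if $\Delta(a)=\sum a_i\otimes h_i$ then $\Delta(a^4)=\sum a_i^4\otimes h_i^4$). Hence $A^4$ is a comodule algebra over the ordinary commutative Hopf algebra $H^4$, and the classical Demazure--Gabriel theorem applied to this pair yields a finitely generated $(A^4)^{\inv}\subset A^{\inv}$ over which $A$ is already finite. Your squaring map cannot play this role because it is not additive, so there is no induced sub-coaction and no route back to $A^{\inv}$.
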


The strategy for proving this theorem will be to reduce the problem to the classical case of commutative $\k$-algebras.

\vspace{0.5cm}

\subsection{Acknowledgements}

I would like to thank Victor Ostrik for his generous advice regarding his results and the results of this paper. I would also like to thank Augustus Lonergan and Nathan Harman for many useful conversations. Above all, I am deeply grateful to Pavel Etingof for suggesting the problem and providing invaluable advice. In particular, I thank him for explaining to me the definition of $\sVec_{2}$, the proof that it does not fiber over $\Vec$, and its intepretation in characteristic 2 (see Subsection 1.5). I also wish to thank him for suggesting the Deligne categories $\Rep S_{t}$ at $t \notin \Z_{\ge 0}$ as an example of a symmetric tensor category in which the Hilbert basis property fails to hold. I also wish to think the referees who reviewed this paper for their helpful comments regarding its organization and for pointing out errors in some of the proofs. This work was partially supported by the NSF grant DMS-1000113.

\section{Preliminaries}

\subsection{} We begin by recalling some technical facts about tensor categories and symmetric monoidal categories. We first recall two facts about tensor categories we will use in the sequel. 

\begin{enumerate}

\item As a consequence of rigidity (existence of left and right duals), the tensor product bifunctor $\otimes$ is biexact (see \cite[4.2]{EGNO} for a proof).

\item In any tensor category $\C$, the unit object $\mathbf{1}$ is simple (see \cite[4.3]{EGNO} for a proof).

\end{enumerate}

\vspace{0.2cm}

Next, note that the assumption that a monoidal category is symmetric also endows it with extra structure. Recall that a \textit{pivotal structure} on a rigid monoidal category $\C$ is a natural monoidal isomorphism $X \rightarrow X^{**}$. Such a structure allows us to define left and right traces of any morphism $X \rightarrow X$ and the pivotal structure is called \textit{spherical} if for every morphism, the left trace equals the right trace (see \cite[4.7]{EGNO}.) In particular, we can define for any object $X$, $\dim(X) = \mathrm{Tr}(\id_{X}).$ In the case of a symmetric monoidal category, we have a natural spherical structure given by

$$\begin{tikzpicture}
\matrix (m) [matrix of math nodes,row sep=6em,column sep=6em,minimum width=6em]
{ X & X \otimes X^{*} \otimes X^{**}  & X^{*} \otimes X \otimes X^{**} & X^{**}  \\};
\path[-stealth]
(m-1-1) edge node[auto] {$\id_{X} \otimes \coev_{X^{*}}$} (m-1-2)
(m-1-2) edge node[auto] {$c_{X, X^{*}} \otimes \id_{X^{**}}$} (m-1-3)
(m-1-3) edge node[auto] {$\ev_{X} \otimes \id_{X^{**}}$}(m-1-4);
\end{tikzpicture}$$
see \cite[9.9]{EGNO}. If $\C$ is now a tensor category equipped with a spherical structure, we say that $\C$ is a spherical tensor category. Recall that for spherical monoidal $\C$, a morphism $f : X \rightarrow Y$ is said to be $\textit{negligible}$ if for any morphism $u : Y \rightarrow X$, the trace of $fu$ is $0$ (see \cite[2.5]{O}). We will let $\N(X, Y)$ denote the space of negligible morphisms between $X$ and $Y$. We will also let $\N(\C)$ denote the objects $X$ in $\C$ for which $\id_{X}$ is negligible and call these objects negligible as well.

\vspace{0.2cm}

We now give a constructions related to symmetric tensor categories that will be extremely important in the sequel. As a special case of this construction, we will obtain the important universal Verlinde category $\Ver_{p}$ that is of central importance to the main theorems of this paper. 

\begin{example} \label{Ver}

To any rigid monoidal $\k$-linear category $\C$ endowed with a spherical structure, we can associate a rigid monoidal quotient category $\overline{\C}$ whose objects are the objects in $\C$ and whose morphism spaces are $\Hom_{\C}(X, Y)/\N(X, Y).$ Additionally, if $\C$ is in fact a tensor category, then the collection of all $\N(X, Y)$ is a tensor ideal, i.e., that if either $f$ or $g$ is negligible, then so are $fg$ and $f \otimes g.$ Hence, in this case, the tensor structure, braiding and spherical structure on $\C$ descend to a tensor structure, braiding and spherical structure on $\overline{\C}$, which is hence a spherical tensor category. In particular, this quotient category is semisimple and the simple objects are the indecomposables $X \in \C$ that are not in $\N(\C)$.

We now use this construction to define the universal Verlinde category $\Ver_{p}$ ($p > 0$) as the quotient of $\Rep_{\k}(\Z/p\Z)$ by the negligible morphisms. This category is semisimple with $p-1$ simple objects $L_{1}, \ldots, L_{p-1}$, where $L_{1} = \mathbf{1}$ and the fusion rules are given by

$$L_{r} \otimes L_{s} \cong \sum_{i=1}^{\min(r, s, p-r, p-s)} L_{|r -s| + 2i - 1}.$$
See \cite[3.2]{O} for more details. Later in the paper, we will give an alternative description of $\Ver_{p}$ in terms of tilting modules for $SL_{2}$ in characteristic $p$ that will prove to be more useful for the problems considered in this paper. 

\end{example}

\vspace{0.5cm}

We next discuss the connection of fiber functors to Hopf algebras.

\begin{remark} \label{Tannakian} \textbf{Tannakian Reconstruction}

Let $\C$ be a symmetric finite tensor category and let $\D$ be a symmetric tensor category such that there exists a symmetric tensor functor $F: \D \rightarrow \C$. Then, $\D$ is isomorphic as a symmetric tensor category to the category of finite dimensional comodules of $\mathrm{Coend}(F)$, which is a commutative Hopf algebra in $\C^{\ind}$. If $\D$ is finite, then $\mathrm{Coend}(F)$ is a commutative Hopf algebra in $\C$ and not just a commutative Hopf ind-algebra. 

For a proof of this fact in the case where $\C$ is the category of $\k$-vector spaces, see \cite[5.2, 5.4]{EGNO}. The proof in the case of general $\C$ is completely analogous.

\end{remark}

\vspace{0.2cm}

We next introduce some technical definitions that are used in the statement of the main theorems of this paper. We begin by defining the ind-completion of a symmetric tensor category.

\begin{definition} \label{ind}

Let $\C$ be a symmetric tensor category. By $\C^{\ind}$, we denote the ind-completion of $\C$, i.e., the closure of $\C$ under taking filtered colimits of objects in $\C$. Since the tensor product in $\C$ is exact, it commutes with taking filtered colimits and hence extends to a tensor product on $\C^{\ind}$. Additionally, naturality of the braiding implies that the braiding extends to a symmetric structure on $\C^{\ind}$. $\C^{\ind}$ is thus a symmetric $\k$-linear abelian monoidal category in which the tensor product structure $\otimes$ is exact (but it is neither rigid nor locally finite).

\end{definition}

A specific example of $\C^{\ind}$ that we will repeatedly use in the sequel is in the case where $\C$ is  a symmetric fusion category, i.e., when $\C$ is  finite and semisimple. In this case, the objects of $\C^{\ind}$ are precisely the (possibly infinite) direct sums of the simple objects in $\C$. 

As a matter of convention, if $\C$ is a symmetric finite tensor category,  when we use the word ``object", we will mean an object in $\C$, i.e., an object of finite length in $\C^{\ind}$ and we will use the term ind-object whenever referring to objects in $\C^{\ind}$ that may have infinite length. Sometimes, for emphasis, we will use the phrase ``actual object" to refer to the finite length objects.

\vspace{0.2cm}

We now define the notions of finite generation and Noetherianity for commutative ind-algebras in symmetric tensor categories and their modules.  Recall that in any symmetric tensor category, the notion of the symmetric algebra of an object is well defined as a commutative ind-algebra. For any object $X$ in $\C^{\ind}$, let $S(X)$ denote its symmetric algebra.

\begin{definition} We say that a commutative algebra $A$ in $\C^{\ind}$ is \textit{finitely generated} if there exists some actual object $X \in \C$ and a surjective morphism of algebras

$$S(X) \rightarrow A.$$
For an arbitrary commutative algebra $A \in \C^{\ind}$, we say that an $A$-module $M \in \C^{\ind}$ is finitely generated if there exists an object $X \in \C$ and a surjective morphism of $A$-modules

$$A \otimes X \rightarrow M$$
where the module structure on $A \otimes X$ comes from the module structure on $A$. 

\end{definition}

\begin{definition} \label{Noetherian} For a commutative ind-algebra $A$, we say that an $A$-module $M$ is \textit{Noetherian} if its $A$-submodules satisfy the ascending chain condition, i.e., that for any sequence of submodules

$$M_{0} \rightarrow M_{1} \rightarrow M_{2} \rightarrow \cdots$$
in which the morphisms are mono, there exists some $n$ such that for all $N \ge n$, the map $M_{N} \rightarrow M_{N+1}$ is an isomorphism. We say that $A$ is a \textit{Noetherian algebra} if all of its finitely generated modules are Noetherian.

\end{definition}

We will prove in section 2.2 that Noetherianity of a module is equivalent to finite generation of submodules. 

\begin{remark} 

Unlike the classical case,  in the general setting of an arbitrary tensor categories we don't know if an algebra which is Noetherian as a module over itself is necessarily Noetherian in the sense of Definition \ref{Noetherian}. 

\end{remark}

\vspace{0.2cm}

We end this subsection by formally defining the invariants of an ind-object in a symmetric tensor category $\C$.

\begin{definition} Let $X \in \C^{\ind}$. Then, we define the \textit{object of invariants} of $X$ to be the sum of all subobjects in $X$ that are isomorphic to $\mathbf{1}$. We denote this by $X^{\inv}$. Since a sum of simple objects is always a direct sum, $X^{\inv}$ is a direct sum of copies of $\mathbf{1}$, i.e., it is trivial. Thus, following the general principle as stated earlier, we view $X^{\inv}$ as an ordinary vector space by identifying it with $\Hom_{\C}(\mathbf{1}, X).$ 
\end{definition}

\begin{remark} 

If $\C$ is a symmetric finite tensor category with a Verlinde fiber functor $F$, then by Remark \ref{Tannakian} $\C$ is equivalent as a symmetric tensor category to the category of finite dimensional comodules of a commutative Hopf algebra $H$ in $\Ver_{p}$. In this case, for any comodule $X$ with coaction $\Delta$, $X^{\inv}$ is the sum of all subobjects $Y$ such that $\Delta|_{Y}$ coincides with the inclusion of $Y$ into $X \otimes H$ as $Y \otimes \mathbf{1}$ (with $\mathbf{1}$ in $\Ver_{p}$ identified with the unit object in $H$). 

\end{remark}

\vspace{0.5cm}

\subsection{} We now prove some preliminary facts about commutative ind-algebras in tensor categories. 

 Let $F : \C \rightarrow \D$ be a symmetric tensor functor between symmetric tensor categories. Since this is exact, it extends to a functor $F : \C^{\ind} \rightarrow \D^{\ind}$. 
 
\begin{lemma} \label{funct:symm} Let $X \in \C^{\ind}$. Then, 

$$F(S(X)) \cong S(F(X))$$
as objects in $\D^{\ind}$.

\end{lemma}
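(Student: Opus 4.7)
The plan is to realize $S(X)$ as constructed from the ingredients preserved by a symmetric tensor functor, and then read off the isomorphism from those preservation properties. Concretely, I would define $S^n(X)$ as the cokernel (in $\C^{\ind}$) of the map
\[
\bigoplus_{\sigma \in S_n} X^{\otimes n} \longrightarrow X^{\otimes n}, \qquad (x_\sigma)_{\sigma} \longmapsto \sum_\sigma (\sigma \cdot x_\sigma - x_\sigma),
\]
where the $S_n$-action on $X^{\otimes n}$ is built out of the braiding $c$ via the usual expression for transpositions and concatenation for products; and set $S(X) = \bigoplus_{n \ge 0} S^n(X) \in \C^{\ind}$. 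This is the right characteristic-free definition, since it does not require inverting $n!$ and agrees with the standard symmetric algebra construction in a symmetric monoidal category.

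First I would observe that the monoidal structure on $F$ gives, by iteration, a natural isomorphism $J_n: F(X^{\otimes n}) \xrightarrow{\sim} F(X)^{\otimes n}$ in $\D^{\ind}$. Since $F$ is a \emph{symmetric} tensor functor, $J_2 \circ F(c_{X,X}) = c_{F(X),F(X)} \circ J_2$, and the standard coherence argument (expressing each $\sigma \in S_n$ as a product of adjacent transpositions) upgrades this to the statement that $J_n$ intertwines the $S_n$-action on $F(X^{\otimes n})$ (transported from the $\C$-action via $F$) with the $S_n$-action on $F(X)^{\otimes n}$ defined directly from the braiding in $\D$.

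Next I would use that $F$ is exact (and additive), hence preserves finite direct sums and cokernels. Applying $F$ to the defining cokernel sequence for $S^n(X)$ and transporting along $J_n$ and $\bigoplus_\sigma J_n$ gives a commutative diagram whose rows are the cokernel diagrams for $S^n(X)$ and $S^n(F(X))$; by universality of cokernels, this yields a natural isomorphism $F(S^n(X)) \cong S^n(F(X))$ in $\D^{\ind}$. Since filtered colimits in $\C^{\ind}$ (in particular countable direct sums) are preserved by $F$, summing over $n$ produces
\[
F(S(X)) \;=\; F\Bigl(\bigoplus_{n \ge 0} S^n(X)\Bigr) \;\cong\; \bigoplus_{n \ge 0} F(S^n(X)) \;\cong\; \bigoplus_{n \ge 0} S^n(F(X)) \;=\; S(F(X)).
\]

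I expect the only real subtlety to be the compatibility of $F$ with the $S_n$-action on $X^{\otimes n}$, which is where symmetry of $F$ (rather than just monoidality) is used; once this coherence is in place, the rest is formal from exactness and preservation of direct sums. If one further wants the conclusion upgraded from an isomorphism of objects to an isomorphism of commutative ind-algebras, it suffices to note that the multiplication on $S(X)$ is induced, via the cokernel description, from the canonical concatenation maps $X^{\otimes m} \otimes X^{\otimes n} \to X^{\otimes(m+n)}$, and $F$ carries these to the analogous maps for $F(X)$ by the same monoidal-structure argument.
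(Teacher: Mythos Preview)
Your proof is correct and is precisely the paper's approach, spelled out in full detail: the paper's proof is the single sentence ``This follows from the fact that symmetric tensor functors preserve tensor products and the commutativity isomorphisms,'' and your argument is exactly the unpacking of that sentence via the cokernel description of $S^n$, the $S_n$-equivariance of the monoidal coherence isomorphisms $J_n$, and exactness of $F$.
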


\begin{proof} This follows from the fact that symmetric tensor functors preserve tensor products and the commutativity isomorphisms.
\end{proof}

We next show that taking symmetric algebra of a direct sum gives a tensor product of symmetric algebras. First, we need to make a definition.

\begin{definition} Let $A$ and $B$ be commutative ind-algebras in $\C$. Then, we define a commutative multiplication on $A \otimes B$

$$A \otimes B \otimes A \otimes B \rightarrow A \otimes B$$
as $(m_{A} \otimes m_{B}) \circ (\id_{A} \otimes c_{B, A} \otimes \id_{B})$. 

\end{definition}

It is clear that $A \otimes B$ satisfies the same universal property in the category of commutative ind-algebras in $\C$ as it does in the standard case when $\C = \Vec$. Hence, we have the following lemma.

\begin{lemma} \label{symm:sum} Let $X, Y \in \C^{\ind}$. Then, we have a natural isomorphism

$$S(X \oplus Y) \cong S(X) \otimes S(Y)$$
of commutative ind-algebras.

\end{lemma}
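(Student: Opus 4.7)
The plan is to establish the isomorphism by Yoneda's lemma, showing that both $S(X \oplus Y)$ and $S(X) \otimes S(Y)$ corepresent the same functor on the category of commutative ind-algebras in $\C^{\ind}$. This mirrors the classical argument for $\C = \Vec$, and it carries over here because the only ingredients needed are the universal property of the symmetric algebra and the fact, asserted in the paragraph preceding the lemma, that $A \otimes B$ is the coproduct of $A$ and $B$ among commutative ind-algebras.

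First I would record the universal property of $S(X)$: for any commutative ind-algebra $B$ in $\C^{\ind}$, restriction to the degree-one component gives a bijection between algebra morphisms $S(X) \to B$ and morphisms $X \to B$ in $\C^{\ind}$. This rests on the standard construction $S(X) = \bigoplus_{n \ge 0} S^{n} X$, where $S^{n} X$ is the coinvariants of $X^{\otimes n}$ under the symmetric group action built from the braiding $c$. Biexactness of $\otimes$ in $\C^{\ind}$, together with the fact that $\otimes$ commutes with filtered colimits, make both the construction and the universal property go through verbatim from the $\Vec$ case.

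Combining these two universal properties with the fact that $X \oplus Y$ is the coproduct in $\C^{\ind}$, for any commutative ind-algebra $B$ we obtain natural bijections
$$\Hom_{\Alg(\C^{\ind})}(S(X) \otimes S(Y), B) \;\cong\; \Hom_{\C^{\ind}}(X, B) \times \Hom_{\C^{\ind}}(Y, B) \;\cong\; \Hom_{\Alg(\C^{\ind})}(S(X \oplus Y), B),$$
where the first bijection uses that $\otimes$ is the coproduct of algebras followed by the universal property of $S$, and the second uses the universal property of $S$ followed by the direct sum being the coproduct in $\C^{\ind}$. Yoneda's lemma then produces the desired natural isomorphism $S(X \oplus Y) \cong S(X) \otimes S(Y)$ of commutative ind-algebras, and chasing the identity yields the explicit form: the algebra map extending the canonical morphism $X \oplus Y \to S(X) \otimes S(Y)$ obtained from the inclusions $X \hookrightarrow S(X)$ and $Y \hookrightarrow S(Y)$.

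There is no serious obstacle to this argument; the only delicate point is the verification of the universal property of $S(X)$ for general ind-objects, and this follows once one checks that symmetric algebras commute with filtered colimits, which is immediate from the compatibility of both $\otimes$ and colimits of linear quotients with filtered colimits in $\C^{\ind}$.
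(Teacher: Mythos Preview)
Your proof is correct and follows essentially the same approach as the paper: both arguments use Yoneda's lemma, combining the universal property of the symmetric algebra, the fact that $\oplus$ is the coproduct in $\C^{\ind}$, and the fact that $\otimes$ is the coproduct of commutative ind-algebras, to exhibit the same chain of natural bijections of Hom-sets. Your write-up is somewhat more detailed about the ind-object aspects, but the strategy is identical.
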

\begin{proof} The proof follows by showing that both objects satisfy the same universal property. Let $\C'$ be the category of commutative ind-algebras in $\C$ and let $A \in \C'$. Then, we have natural isomorphisms,

\begin{align*}
\Hom_{\C'}(S(X \oplus Y), A) &\cong \Hom_{\C}(X \oplus Y, A) \\
&\cong \Hom_{\C}(X, A) \oplus \Hom_{\C}(Y, A) \\
&\cong \Hom_{\C'}(S(X), A) \oplus \Hom_{\C'}(S(Y), A) \\
&\cong \Hom_{\C'}(S(X)\otimes S(Y), A).
\end{align*}

\end{proof}

\vspace{0.2cm}

We next prove a claim we made in the introduction: Noetherianity is equivalent to finite generation of submodules. 

\begin{lemma} \label{Noeth:finite} Let $A$ be a commutative ind-algebra in $\C$. Let $M$ be an $A$-module. Then, $M$ is Noetherian if and only if every submodule $N$ of $M$ is finitely generated. 
\end{lemma}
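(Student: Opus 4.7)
The plan is to adapt the classical commutative-algebra proof to the ind-categorical setting, where ``finitely generated'' means being a quotient of $A\otimes X$ for an \emph{actual} object $X\in\C$ (of finite length in $\C^{\ind}$). The proof will rely on two structural facts about $\C^{\ind}$: (i) any ind-object $N$ is the filtered colimit of its actual subobjects, and (ii) actual objects are compact, in the sense that for a filtered colimit $N=\varinjlim N_i$ and an actual $X$, every morphism $X\to N$ factors through some $N_i$. Both follow from the construction of $\C^{\ind}$ as the ind-completion of $\C$; filtered colimits of monomorphisms in $\C^{\ind}$ are again monomorphisms because $\C^{\ind}$ is Grothendieck abelian, which I will use implicitly.

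For the forward direction, assume $M$ is Noetherian and let $N\subseteq M$ be an $A$-submodule. I construct an ascending chain of finitely generated $A$-submodules of $N$ as follows. Pick any actual subobject $X_1\subseteq N$ and set $N_1=A\cdot X_1$, the image of the action morphism $A\otimes X_1\to N$. If $N_k\subsetneq N$ at some stage, then, writing $N$ as the filtered colimit of its actual subobjects, there must be an actual subobject $Y\subseteq N$ whose image in $N/N_k$ is nonzero; set $X_{k+1}=X_k\oplus Y$ and $N_{k+1}=A\cdot X_{k+1}$, obtaining $N_k\subsetneq N_{k+1}\subseteq N$. If this process never terminates, the chain $N_1\subsetneq N_2\subsetneq\cdots$ contradicts the ACC on $A$-submodules of $M$. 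Hence some $N_k$ equals $N$, so $N$ is finitely generated.

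For the reverse direction, assume every $A$-submodule of $M$ is finitely generated and let $M_0\hookrightarrow M_1\hookrightarrow\cdots$ be a chain of $A$-submodules. Form $N=\varinjlim M_i\subseteq M$ (the colimit remains a submodule of $M$ because filtered colimits of monos are monos in $\C^{\ind}$, and $N$ inherits an $A$-module structure since filtered colimits commute with the biexact bifunctor $A\otimes(-)$). By hypothesis there is a surjection $\pi:A\otimes X\twoheadrightarrow N$ with $X$ an actual object. Composing with the unit $\eta:X\to A\otimes X$ yields $\iota=\pi\circ\eta:X\to N$, and by compactness of $X$, $\iota$ factors through some inclusion $M_n\hookrightarrow N$. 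Since $M_n$ is an $A$-submodule of $N$ and $\pi$ is the $A$-module map generated by $\iota$, its image $A\cdot\iota(X)=N$ is contained in $M_n$. Therefore $M_n=N$, so $M_{n}\to M_{n+1}$ is an isomorphism, and similarly for all larger indices.

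The only non-routine issue is the compactness of actual objects in $\C^{\ind}$, together with the fact that the $A$-module structure transports correctly under filtered colimits; both ultimately rest on the exactness of $\otimes$ and the defining universal property of the ind-completion, and no new input is required beyond the setup in Definition~\ref{ind}. In particular, no additional hypothesis (such as a fiber functor) is needed: the lemma holds in any symmetric tensor category $\C$.
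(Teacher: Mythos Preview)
Your proof is correct and follows essentially the same strategy as the paper's: in both directions the argument hinges on the facts that ind-objects are filtered colimits of their actual subobjects and that actual objects are compact. Your forward direction is in fact a bit cleaner than the paper's, since you choose the witness $Y$ directly as an actual subobject of $N$ with nonzero image in $N/N_k$, whereas the paper first picks $Y$ inside the quotient $N/N_r$ and then spends a paragraph lifting it back to a subobject $Z\subseteq N_{r+1}$; in the reverse direction the paper replaces your appeal to compactness by the equivalent observation that the finite-length image of $X$ in $M'$ must already sit inside some $M_i$.
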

\begin{proof} We first prove the forward direction. Suppose $M$ is Noetherian and assume for contradiction that there exists a submodule $N$ of $M$ that is not finitely generated. We inductively create an infinite ascending chain of finitely generated submodules $\{N_{i}\}$ of $N$ that does not terminate. 

Since $N$ is an ind-object in $\C$, it must contain an actual object $X_{0}$. Let $N_{0}$ be the submodule generated by $X_{0}$, i.e., let it be the submodule given by the image of $A \otimes X \rightarrow M$ under the action map. Then, as $X_{0}$ is a subobject of $N$, $N_{0}$ is a submodule of $N$ and is finitely generated. This is the first step of the construction. Now, suppose $N_{r}$ has been defined as a finitely generated submodule of $N$. Then, the inclusion of $N_{r}$ into $N$ is not an isomorphism as $N$ would then have been finitely generated. Hence, the cokernel of this inclusion is not $0$ and thus contains a nonzero object $Y \in \C$. Take the submodule generated by this object and call it $\overline{N_{r+1}}$, which is a submodule of $N/N_{r}$. Hence, we can find a submodule $N_{r+1}$ of $N$ such that the inclusion of $N_{r}$ into $N$ factors properly through $N_{r+1}$, and the quotient $N_{r+1}/N_{r}$ is $\overline{N_{r+1}}$. 

We need to show that $N_{r+1}$ is finitely generated. We know that $N_{r}$ is finitely generated and so is $N_{r+1}/N_{r}$. Let $X_{r}$ and $Y$ be actual objects in $\C$ with $X_{r} \subseteq N_{r}$ and $Y \subseteq \overline{N_{r+1}}$ such that the natural maps

$$A \otimes X_{r} \rightarrow N_{r}$$
and 

$$A \otimes Y \rightarrow \overline{N_{r+1}}$$
are epimorphisms. We claim that there exists some subobject $Z$ of $N_{r+1}$, with $Z \in \C$, such that the projection from $Z$ to $\overline{N_{r+1}}$ contains $Y$. This is because the direct limit of the images of subobjects of $N_{r+1}$ under the projection map is $\overline{N_{r+1}}$ and hence any object of finite length must be contained in the direct limit of some finite subset of these images. So, if $Z_{1}, \ldots, Z_{n}$ are subobjects of $N_{r+1}$, the sum of whose images contains $Y$, then we take $Z = \sum_{i=1}^{n} Z_{i}$.

We now claim that $X_{r}$ and $Z$ together generate $N_{r+1}$. Let $X_{r+1} = Z + X_{r}$. Then, the submodule generated by $X_{r+1}$ contains $N_{r}$ (as it contains $X_{r}$) and surjects under projection to $\overline{N_{r+1}}$ (as it contains $Z$). Hence, $X_{r+1}$ generates $N_{r+1}$, which is therefore finitely generated.

We now prove the reverse direction. Suppose every submodule of $M$ is finitely generated. Let

$$M_{0} \rightarrow M_{1} \rightarrow \cdots$$
be a sequence of monomorphisms of $A$-submodules of $M$. Then, since $\C^{\ind}$ is closed under filtered colimits, we can take the colimit (in this case the union) of this sequence to get an ind-subobject $M'$ of $M$. Additionally, since the $A$-module structure on $M_{i}$ commutes with the morphisms that we take the colimit of, $M'$ acquires a natural structure of an $A$-module. Hence, by the assumption that $A$-submodules of $M$ are finitely generated, there exists an object $X \in \C$ and an epimorphism $A \otimes X \rightarrow M'$ of $A$-modules.

But now, such a morphism has to come from a morphism in $\C^{\ind}$ from $X$ to $M'$. As $X$ is an actual object in $\C$, its image in $M'$ has finite length and hence must lie in some $M_{i}$ (as otherwise, taking the intersection of the image with $M_{i}$ gives an infinite ascending chain of subobjects of $X$ that does not stabilize, which cannot exist). Hence, as $M_{i}$ is an $A$-module, the image of $A \otimes X \rightarrow M'$ lies in $M_{i}$ and hence the inclusion of $M_{i}$ into $M'$ is an isomorphism. Hence, for all $N \ge i$, the inclusion $M_{N} \rightarrow M_{N+1}$ is an isomorphism. This proves that $M$ is Noetherian.

\end{proof}

\vspace{0.2cm}

Recall that any symmetric tensor category $\C$ contains the category of vector spaces as the symmetric tensor subcategory consisting of direct sums of copies of $\mathbf{1}$. Hence, for any $n$, $\k[x_{1}, \ldots, x_{n}]$ can be viewed as an ind-algebra in $\C^{\ind}$ via this embedding of $\Vec$ into $\C$. Our final goal of this section is to prove that tensoring with this polynomial algebra preserves Noetherianity. The proof of this assertion is very similar to the proof of the classical Hilbert basis theorem but is stated in a categorical manner. We give the statement and proof below. If the proof seems complicated, just translate the statements to the case when everything is actually a vector space and it will make sense.

\begin{proposition} \label{poly} Let $A$ be a commutative ind-algebra in $\C$. Let $M$ be a Noetherian $A$-module. Then, $\k[x_{1}, \ldots, x_{r}] \otimes M$ has a natural structure of a Noetherian $\k[x_{1}, \ldots, x_{r}] \otimes A$-module. 
\end{proposition}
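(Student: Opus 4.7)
The plan is to adapt the classical proof of the Hilbert basis theorem to this categorical setting. Applying Lemma \ref{symm:sum} to the embedding $\Vec \hookrightarrow \C$ gives $\k[x_1,\ldots,x_r] \cong \k[x_1,\ldots,x_{r-1}] \otimes \k[x_r]$ as commutative ind-algebras in $\C^{\ind}$, so by induction on $r$ (applied with $\k[x_r] \otimes A$ in place of $A$ and $\k[x_r] \otimes M$ in place of $M$) it suffices to treat the case $r=1$. Thus I reduce to showing that if $M$ is a Noetherian $A$-module, then $\k[x] \otimes M$ is a Noetherian $\k[x]\otimes A$-module, with $A$ acting through the second tensor factor and $x$ through the first.

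By Lemma \ref{Noeth:finite}, it suffices to prove that every $\k[x] \otimes A$-submodule $N \subseteq \k[x] \otimes M$ is finitely generated. Set $V_d = \spn(1, x, \ldots, x^d) \subseteq \k[x]$, $F_d := V_d \otimes M$ (with $F_{-1} := 0$), and $N_d := N \cap F_d$. The canonical splitting $F_d = F_{d-1} \oplus (x^d \otimes M)$ identifies $F_d / F_{d-1}$ with $M$, and I define the $d$-th \emph{leading coefficient subobject} $L_d \subseteq M$ as the image of $N_d$ under this projection; it is an $A$-submodule of $M$ because the filtration and the identification of associated gradeds are both $A$-equivariant. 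Multiplication by $x$ carries $N_d$ into $N_{d+1}$ and on associated gradeds realizes the identity map $M \to M$, so the $L_d$ form an ascending chain $L_0 \subseteq L_1 \subseteq \cdots$ of $A$-submodules of $M$.

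Noetherianity of $M$ then forces this chain to stabilize at some index $D$, and, by Lemma \ref{Noeth:finite}, each $L_d$ (for $0 \le d \le D$) is finitely generated as an $A$-module. For each such $d$ I would choose an actual object $Y_d \in \C$ and a morphism $Y_d \to N_d$ whose composition with the projection to $L_d$ yields an $A$-module epimorphism $A \otimes Y_d \twoheadrightarrow L_d$; existence of the lift is guaranteed by the canonical splitting of the filtration. Assembling these produces a morphism of $\k[x]\otimes A$-modules
$$\varphi : (\k[x]\otimes A) \otimes \bigoplus_{d=0}^{D} Y_d \longrightarrow N.$$
I would then prove by induction on filtration degree that $\varphi$ is an epimorphism: any morphism from an actual object $W$ into $N_d$ projects to a morphism $W \to L_d$, which, using the stabilization $L_d = L_D$ for $d \ge D$ (handled by applying $x^{d-D}$ to the $Y_e$), factors through the direct sum of the $A \otimes Y_e$; subtracting off the corresponding component of $\varphi$ drops the filtration degree, and iteration finishes the proof.

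The chief obstacle will be carrying out the categorical analog of ``leading coefficient'' and ``subtract a generator to reduce degree.'' Specifically, one must verify (i) that $L_d$ is genuinely an $A$-submodule of $M$ and that the comparisons $L_d \hookrightarrow L_{d+1}$ are $A$-linear monomorphisms, and (ii) that the degree-reduction step --- classically an elementwise computation --- can be rigorously phrased as a statement about morphisms from arbitrary test objects of $\C$ into $N$ in $\C^{\ind}$. Once this categorical bookkeeping is handled, the combinatorial heart of the argument is identical to the classical proof in $\Vec$.
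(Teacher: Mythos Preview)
Your approach is essentially the same as the paper's: both adapt the classical leading-coefficient proof of the Hilbert basis theorem to the categorical setting, reducing to $r=1$ and using Lemma~\ref{Noeth:finite}. The organization differs slightly---you track the ascending chain $L_0 \subseteq L_1 \subseteq \cdots$ and invoke stabilization plus finite generation of each $L_d$, whereas the paper works with the single $A$-submodule $LC_N = \sum_X LC(X) \subseteq M$ (your $\bigcup_d L_d$), shows it is generated by finitely many $LC(X_i)$, sets $d = \max n(X_i)$, and then proves by a five-lemma contradiction that $\k[x](N \cap M_d) = N$. These are two standard presentations of the same classical argument and neither buys anything the other does not.

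One point in your write-up is not right as stated: the sentence ``existence of the lift is guaranteed by the canonical splitting of the filtration.'' The splitting $F_d = F_{d-1} \oplus (x^d \otimes M)$ lifts a map $Z \to L_d$ only to $x^d \otimes M \subseteq F_d$, not to $N_d$; there is no reason the section lands in $N$. The correct argument (which the paper carries out explicitly) is the ind-object one: since $N_d \twoheadrightarrow L_d$ and $N_d$ is the filtered union of its finite-length subobjects, the finite-length image of a chosen generating object $Z \to L_d$ is contained in the image of some finite-length $Y_d \subseteq N_d$, and then $A \otimes Y_d \twoheadrightarrow L_d$. You have already flagged exactly this kind of categorical bookkeeping as the chief obstacle, so once you replace the splitting remark with this colimit argument your proof goes through.
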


\begin{proof} By induction on $r$, we may assume $r = 1$. We can then write

$$\k[x] \otimes M = M \oplus xM \oplus x^{2}M \oplus \cdots$$
Here $xM$ can be viewed as $x \otimes M$ or as the image of $M$ under the action of $x$. This clearly has a natural structure of a $\k[x] \otimes A$-module defined as $\id_{\k[x]} \otimes c_{A, \k[x]} \otimes \id_{M}$ followed by componentwise action. Using Lemma \ref{Noeth:finite}, we will show that this is Noetherian by showing that any $\k[x] \otimes A$-submodule $N$ of $\k[x] \otimes M$ is finitely generated. 

For $n \ge 0$, define $M_{n} = M \oplus \cdots \oplus x^{n}M$ and let $\pi_{n}$ be the projection $\k[x] \otimes M \rightarrow x^{n}M \cong M$. For each finitely generated $A$-submodule $X$ of $N$, we define the associated object of leading coefficients $LC(X)$ , which will be an $A$-submodule of $M$. Any such finitely generated submodule must be contained in $M_{n}$ for some $n$. Let $n(X)$ be the minimal such $n$ for $X$ and define $LC(X) = \pi_{n(X)}(X) \subseteq x^{n(X)}M$, which we identify with $M$ by the multiplication isomorphism $x^{n(X)} :  M \rightarrow x^{n(X)}M.$ 

Define $LC_{N}$ to be the sum of $LC(X)$ over all finitely generated $A$-submodules $X$ of $N$. This is an $A$-submodule of $M$ and is hence finitely generated by Noetherianity of $M$. If $A \otimes Z \rightarrow LC_{N}$ is a surjection of $A$-modules with $Z \in \C$, then this comes from a morphism $Z \rightarrow LC_{N}$ in $\C$. Let $X_{1}, \ldots, X_{m}$ be finitely generated $A$-submodules of $N$ such that $\sum_{i=1}^{m} LC(X_{i})$ contains the image of the morphism from $Z$ (some such finite list must exist as $Z$ has finite length). Then, $LC_{N} =  \sum_{i=1}^{m} LC(X_{i})$. 

Let $d_{i} = n(X_{i})$ and let $d$ be the maximum of the $d_{i}$. Finally, define

$$B: =  k[x](N \cap M_{d}).$$
Clearly $B \subseteq N$ and by choice of $d$, we also have

$$\sum_{i=1}^{m} k[x]X_{i} \subseteq B.$$
We claim that $B = N$. Suppose for contradiction that $B \not = N$. Since $N \in \C^{\ind}$, it is the sum of all the objects it contains. Hence, we can find some object $Y \in \C$ that is a subobject of $N$ but is not contained in $B$. Taking the $A$-submodule of $N$ generated by $Y$, we see that there exist finitely generated $A$-submodules of $N$ that are not contained in $B$. Let $N'$ be such a submodule such that $h = n(N')$ is minimal amongst all such submodules. Note that $h > d$ as otherwise $N' \subseteq N \cap M_{d} \subseteq B$.

Consider now the finitely generated $A$-submodule $B \cap M_{h}$ of $B$. Note that 

$$\sum_{i=1}^{m}x^{h-d_{i}} X_{i} \subseteq B \cap M_{h}$$
and hence, $LC_{N} \subseteq \pi_{h}(B \cap M_{h})$. From this, it follows that

$$\pi_{h}(N') = LC(N') \subseteq LC(B \cap M_{h}) = \pi_{h}(B \cap M_{h}) = \sum_{i=1}^{m} LC(X_{i}) = LC_{N}.$$ 

We show this implies the existence of a finitely generated $A$-submodule $N''$ of $(B \cap M_{h}) + N'$ that is not contained in $B \cap M_{h}$ and has $n(N'') < n(N')$. For this purpose, consider the inclusion of $B \cap M_{h}$ into $(B \cap M_{h}) + N'$. Since $B \cap M_{h}, N' \subseteq M_{h}$, we have a commutative diagram

$$\begin{tikzpicture}
\matrix (m) [matrix of math nodes,row sep=3em,column sep=3em,minimum width=3em]
{ 0 & B \cap M_{h-1} & B \cap M_{h} & \pi_{h}(B) & 0 \\
 0 & ((B \cap M_{h}) + N') \cap M_{h-1} & (B \cap M_{h}) + N' & \pi_{h}((B \cap M_{h}) + N') & 0 \\};
\path[-stealth]
(m-1-1) edge (m-1-2)
(m-1-2) edge (m-1-3)
(m-1-3) edge (m-1-4)
(m-1-4) edge (m-1-5)
(m-2-1) edge (m-2-2)
(m-2-2) edge (m-2-3)
(m-2-3) edge (m-2-4)
(m-2-4) edge (m-2-5)
(m-1-2) edge node[auto] {$\alpha$} (m-2-2)
(m-1-3) edge node[auto] {$\beta$} (m-2-3)
(m-1-4) edge node[auto] {$\gamma$} (m-2-4);
\end{tikzpicture}$$
where the rows are exact and the vertical maps are induced by the inclusion of $B \cap M_{h}$ into $(B \cap M_{h}) + N'$. All three vertical maps are monomorphisms. $\gamma$ is an epimorphism based on the argument above.  But, by choice of $N'$, $\beta$ is not an epimorphism. Hence, $\alpha$ cannot be an epimorphism by the five lemma. Hence, we can find a finitely generated nonzero $A$-submodule $N''$ of $((B \cap M_{h}) + N') \cap M_{h-1} \subseteq N \cap M_{h-1}$ that is not contained in $B \cap M_{h}$. Since $N''$ is contained in $M_{h-1} \subseteq M_{h}$, this implies that $N''$ is not contained in $B$. This contradicts the minimality of $n(N')$. Hence, $N=B$ is hence finitely generated as a $\k[x] \otimes A$-module.

\end{proof}

\vspace{0.5cm}

\subsection{} We end section 2 by proving the claims made in section 1.5. Let $\k$ be a field of characteristic $2$ and let $D$ be the Hopf algebra $\k[d]/(d^{2})$ defined as in section 1.5, with the $R$-matrix

$$R = 1 \otimes 1 + d \otimes d.$$

We first show that $\Rep D$ does not fiber over $\Ver_{2}$, which is just the category of vector spaces over $\k$. This proof was suggested by Victor Ostrik. Note that objects in $\Rep D$ are $\k$-vector spaces $V$ equipped with endomorphisms $d : V \rightarrow V$ such that $d^{2} = 0$.  Thus, $\Rep D$ has two indecomposables: the one dimensional vector space $\mathbf{1}$ with $d = 0$, and the two dimensional vector space $W$ with $d$ the strictly upper triangular matrix

$$\left(\begin{array}{cc}
0 & 1\\
0 & 0 \end{array}\right).$$
Now, $W$ is a self extension of $\mathbf{1}$. Thus, if there existed a fiber functor $F: \Rep D \rightarrow \Ver_{2}$, then, $F(W) \cong \mathbf{1} \oplus \mathbf{1}$. Hence, if $U$ is the copy of $\mathbf{1}$ that is a subobject of $W$, then the existence of a fiber functor would imply that the natural map

$$S(U) \rightarrow S(W)$$
is a monomorphism, as the natural map $S(F(U)) \rightarrow S(F(W))$ is a monomorphism. But we can show that this fails to be true. $W$ has a basis $\{x, y\}$ with $d(y) = 0$, $d(x) = y$. Then, $\k\{y\} = U$. Using the definition of the $R$-matrix, we can see that for $a, b \in S(X)$,

$$[a, b] = d(a)d(b).$$
Hence, in $S(X)$,

$$0 = [x, x] = d(x)d(x) = y^{2}.$$
Thus, the natural map $S(U) \rightarrow S(W)$ is not a monomorphism and $\Rep D$ is not fibered over $\Ver_{2}$.

\begin{remark}

Note that algebras of the form $S(W \otimes V)$ (with $W$ the 2 dimensional indecomposable in $\Rep D$ as above and $V$ a multiplicity space) appeared as $\widetilde{\Omega}(V)$ in the study of lower central series of free associative algebras in \cite{BEJKL}. 

\end{remark}

\vspace{0.2cm}

We next show that for $\k = \F_{2}$, the field with $2$ elements, $\Rep D$ can be constructed as a nonsemisimple reduction of the category of supervector spaces over a ramified extension of $\Q_{2}$, the $2$-adics. Hence, over any field of characteristic $2$, we can consider $\Rep D$ to be a nonsemisimple analog of supervector spaces (by extension of scalars).

Let $\F = \Q_{2}[\sqrt{2}]$ and let $\O = \Z_{2}[\sqrt{2}]$ be the ring of integers in $\F$. Consider the group algebra $H$ of $\Z/2\Z = \langle 1, g \rangle$ (over $\F$) with $R$-matrix

$$R = \frac{1}{2}(1 \otimes 1 + 1 \otimes g + g \otimes 1 - g \otimes g).$$
Then, $\Rep H$ is the category of supervector spaces over $\F$. 

Let $b = g - 1$. Then, we can rewrite $R$ as

$$R = 1 \otimes 1 - \frac{2}b \otimes b.$$
Let $a =\frac{1}{\sqrt{2}} b$. Then, $a^{2} = -\sqrt{2}a$,

$$R = 1 \otimes 1 - a \otimes a$$
and

$$\Delta(a) = a \otimes 1 +1 \otimes a + \sqrt{2}(a \otimes a).$$

This defines an order in $H$ over $\O$ and reducing this order modulo the maximal ideal in $\O$ (the ideal generated by $\sqrt{2}$) gives us our Hopf algebra $D$ over $\F_{2}$. Thus, $\Rep D$ (over $\F_{2}$) is a nonsemisimple reduction of the category of supervector spaces over $\F$.

\vspace{0.5cm}

\section{Proof of Theorems \ref{Hilb2} and \ref{Inv} for $\C = \Ver_{p}$.}

\subsection{Description of finitely generated algebras in $\Ver_{p}$.} From this section on, $p$ will be a prime. Before we begin the proof of the theorems for $\Ver_{p}$, we briefly describe a different construction of this category. This construction is fairly involved so we refer the reader to \cite[3.2, 4.3]{O} and the additional references \cite{GK, GM} contained within for more details. Consider the category of rational $\k$-representations of a simple algebraic group $G$ of Coxeter number less than $p$, the characteristic of $\k$. This has a full subcategory consisting of tilting modules, which are those representations $T$ such that $T$ and its contragredient both have filtrations whose composition factors are Weyl modules $V_{\lambda}$ corresponding to dominant integral weights $\lambda$. This is only a Karoubian symmetric monoidal category and not a symmetric tensor category but since it is symmetric monoidal, it is still equipped with a spherical structure. Hence, we can still take its quotient by negligible morphisms. This gives us a symmetric fusion category which we denote $\Ver_{p}(G)$, the Verlinde category corresponding to $G$.

\vspace{0.5cm}

We are now ready to prove the following proposition.

\begin{proposition} \label{Ver:finite} Let $X$ be an object in $\Ver_{p}$ and let $n$ be the multiplicity of $\mathbf{1}$ in $X$. Then, 

$$S(X) \cong \k[x_{1}, \ldots, x_{n}] \otimes Y$$
for some object $Y \in \Ver_{p}.$ 

\end{proposition}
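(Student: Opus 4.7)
The plan is to split off the trivial summands of $X$ using Lemma \ref{symm:sum} and then verify that what remains has finite length. By semisimplicity of $\Ver_p$ and Krull--Schmidt, we may write
$$X \cong \mathbf{1}^{\oplus n} \oplus X', \qquad X' \cong \bigoplus_{i=2}^{p-1} L_i^{\oplus m_i},$$
where $X'$ contains no copy of $\mathbf{1}$. Iterated application of Lemma \ref{symm:sum} gives
$$S(X) \cong S(\mathbf{1})^{\otimes n} \otimes \bigotimes_{i=2}^{p-1} S(L_i)^{\otimes m_i}.$$
Since $S(\mathbf{1}) \cong \k[t]$, viewed inside $\Ver_p$ via the canonical embedding $\Vec \hookrightarrow \Ver_p$, the first factor contributes $\k[x_1,\ldots,x_n]$. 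Setting $Y := \bigotimes_{i=2}^{p-1} S(L_i)^{\otimes m_i}$, the proposition reduces to showing that $Y$ is an actual object of $\Ver_p$, i.e.\ has finite length.

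Because the tensor product of finite-length objects in $\Ver_p$ is again of finite length, it suffices to prove that each $S(L_i)$ is of finite length in $\Ver_p$ for $i = 2,\ldots,p-1$, or equivalently that $S^n(L_i) = 0$ in $\Ver_p$ for all $n$ sufficiently large. This is the main obstacle. My plan is to exploit the alternative description of $\Ver_p$ (alluded to at the end of Example \ref{Ver} and to be developed from \cite[3.2, 4.3]{O}) as the quotient of the category of tilting $SL_2$-modules by negligible morphisms. Under this identification, $L_i$ is the image of the simple tilting module $V_{i-1}$, and the braiding on $\Ver_p$ is induced by the ordinary swap of vector spaces, so the symmetric power $S^n(L_i)$ computed in $\Ver_p$ agrees, up to negligibles, with the symmetric power computed in the tilting category.

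The key input is then that the indecomposable tilting summands appearing in $S^n(V_{i-1})$ all have highest weight bounded below by roughly $n$, and every indecomposable tilting $SL_2$-module $T_\lambda$ with $\lambda \ge p-1$ is negligible (since by the Steinberg tensor product theorem together with Weyl's dimension formula, such a $T_\lambda$ has dimension divisible by $p$, so $\id_{T_\lambda}$ has trace $0$). Consequently, for each $i$ there exists $n_i$ such that $S^n(L_i) = 0$ in $\Ver_p$ for all $n \ge n_i$, giving $S(L_i)$ of finite length, $Y \in \Ver_p$, and $S(X) \cong \k[x_1,\ldots,x_n] \otimes Y$ as required. The hardest point in executing this plan is the bookkeeping required to decompose $S^n(V_{i-1})$ into indecomposable tilting pieces in positive characteristic; for $i=2$ this is immediate since $S^n(V_1) = V_n$ and $V_n = T_n$ is negligible (or fails to be tilting altogether, in which case one argues directly inside $\Ver_p$ via the fusion rules of Example \ref{Ver}), but for larger $i$ some care is needed.
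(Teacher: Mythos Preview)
Your overall reduction is exactly the paper's: split off the $n$ copies of $\mathbf{1}$ via Lemma~\ref{symm:sum}, identify $S(\mathbf{1}^{\oplus n})$ with the polynomial ring, and reduce everything to showing that $S(L_i)$ has finite length for each $i \ge 2$. The difference, and the gap, lies in how you handle this last step.

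Your ``key input'' --- that the indecomposable tilting summands of $S^n(V_{i-1})$ have highest weight bounded below by roughly $n$ --- is false for $i > 2$. Already in characteristic zero one has $S^n(V_2) \cong V_{2n} \oplus V_{2n-4} \oplus \cdots$, which contains $V_0$ or $V_2$ for every $n$, and the picture in characteristic $p$ is no better in this respect. So the argument as written does not show that $S^n(L_i)$ eventually vanishes. You are also right to worry about whether $S^n(V_{i-1})$ is tilting at all once $n \ge p$; it typically is not, and the Karoubian tilting category does not have the quotients needed to form symmetric powers in that range, so the phrase ``agrees, up to negligibles, with the symmetric power computed in the tilting category'' has no obvious meaning there.

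The paper sidesteps both difficulties by changing the group. Rather than staying with $SL_2$, it invokes Ostrik's observation that for each $2 \le n \le p-1$ there is a symmetric tensor functor $\Ver_p(SL_n) \to \Ver_p$ sending the $n$-dimensional standard module $V$ of $SL_n$ to $L_n$. One then only needs $S^{p-n+1}(V)$: its degree $p-n+1 \le p-1$ is small enough that the symmetric power exists as a direct summand of $V^{\otimes(p-n+1)}$ in the Karoubian category, and its ordinary dimension $\binom{p}{n-1}$ is divisible by $p$, so it is negligible and hence zero in $\Ver_p(SL_n)$. Higher symmetric powers are then quotients of tensor products with this one and therefore vanish as well. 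Pushing through the functor gives $S^k(L_n) = 0$ for all $k \ge p-n+1$, with no tilting-module bookkeeping required.
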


\begin{proof}

For $p = 2$, $\Ver_{p}$ is just the category of vector spaces over $\k$ and the proposition is hence trivial. So, we assume $p > 2$. It is known (see for example \cite[4.3]{O}) that $\Ver_{p}$, as described in the Example \ref{Ver}, is equivalent as a symmetric tensor category to $\Ver_{p}(SL_{2})$ and that $L_{i}$ corresponds to $V_{i-1}$ (where $i-1$ is the highest weight of the corresponding tilting module) under the equivalence. Additionally, for $p \ge 3$ and $n > 1$, Ostrik also showed in \cite[4.3]{O} that the fiber functor from $\Ver_{p}(SL_{n})$ to $\Ver_{p}$ takes the standard module of $SL_{n}$ to $L_{n}$ (note that here $n \le p - 1$).

Thus, by Lemma \ref{funct:symm}, we can compute the symmetric algebra of $L_{n}$ by computing the symmetric algebra of the standard module $V$ of $SL_{n}$. Now, we have a monoidal functor from the Karoubian monoidal category of tilting modules of $SL_{n}$ to $\Ver_{p}(SL_{n})$. In general, symmetric powers do not exist in a Karoubian category in positive characteristic, as quotients do not exist. However, for $n < p$, for any object $X$, we can identify $S^{n}(X)$ as a direct summand of $X^{\otimes n}$ and direct summands do exist in a Karoubian category. Hence, by a slight variation of Lemma \ref{funct:symm}, $S^{p-n +1}(V)$ is just the image under the quotient functor of the tilting module $S^{p-n+1}(V)$ in the category of $SL_{n}$-representations. 

Now, the dimension of $S^{r}(V)$ is $\binom{n+r-1}{n-1}$, so plugging in $r = p-n+1$ gives us $\binom{p}{n-1} = 0$. Hence, $S^{p-n+1}(V)$ is negligible and thus goes to $0$ under the quotient functor to $\Ver_{p}(SL_{n})$. For $k \ge p - n+2$, $S^{k}(V)$ is a quotient of $V^{\otimes k - p+n-1} \otimes S^{p-n+1}(V)$ in $\Ver_{p}(SL_{n})$ and is hence also $0$. Thus, the symmetric algebra of $V$ and therefore the symmetric algebra of $L_{n}$ is an actual object and not an ind-object, for every $1 < n \le p-1$.

Now, suppose $X \in \Ver_{p}$ is of the form $n \mathbf{1} \oplus Z$, where $Z$ is a direct sum of copies of $L_{2}, \ldots, L_{p-1}$. Using Lemma \ref{symm:sum} and the argument in the previous paragraph, we see that $Y = S(Z)$ has finite length, i.e., it is an actual object in $\Ver_{p}$. Hence, by Lemma \ref{symm:sum} again, 

$$S(X) \cong S(n\mathbf{1}) \otimes Y = \k[x_{1}, \ldots, x_{n}] \otimes Y$$
with $Y \in \Ver_{p}$, as desired. 

\end{proof}

\vspace{0.5cm}

\subsection{Proof of Theorems \ref{Hilb2} and \ref{Inv} for $\Ver_{p}$.} We are now ready to finish the proof of these two theorems for $\Ver_{p}$. The first step in the proof is a reduction to the graded case. This is immediate for Theorem \ref{Hilb2} because any finitely generated $A$-module is also a finitely generated $S(X)$-module if $S(X)$ surjects onto $A$. The same holds for Theorem \ref{Inv}. Suppose $A$ is a finitely generated commutative ind-algebra in $\Ver_{p}$. Then, we have an epimorphism

$$\phi : S(X) \rightarrow A$$
for some $X \in \Ver_{p}$. Additionally, as $\Ver_{p}$ is semisimple, $\phi$ also restricts to a surjection $S(X)^{\inv} \rightarrow A^{\inv}$ by definition of the invariant subobject. Suppose now that we have proved the theorem for $S(X)$. Then, $S(X)^{\inv}$ is finitely generated and hence so is $A^{\inv}$. Additionally, there exists some object $Y \in \Ver_{p}$ and an epimorphism

$$\psi: S(X)^{\inv} \otimes Y \rightarrow S(X)$$
of $S(X)^{\inv}$-modules. Composing with $\phi$ and noting that $\ker(\phi|_{S(X)^{\inv}}) \otimes Y$ goes to $0$, we get an epimorphism

$$A^{\inv} \otimes Y \rightarrow A$$
of $A^{\inv}$-modules. Hence, we have reduced the proof of the theorems to the case where $A = S(X)$ for some $X \in \Ver_{p}$. But, by Proposition \ref{Ver:finite}, 

$$S(X) \cong \k[x_{1}, \ldots, x_{r}] \otimes Y$$
for some $r \ge 0$ and some object $Y$ in $\Ver_{p}$.  We now apply Lemma \ref{poly}. Since $Y$ has finite length, we see that $S(X)$ is finitely generated over $\k[x_{1}, \ldots, x_{r}]$ in $\Ver_{p}$ and is hence a Noetherian algebra. Additionally, the invariants of $S(X)$ are finitely generated as a module over $\k[x_{1}, \ldots, x_{r}]$ and are hence certainly finitely generated. Finally, as $S(X)$ is finitely generated over $\k[x_{1}, \ldots, x_{r}]$ it is finitely generated over its invariants. $\hfill\square$.

\vspace{0.5cm}

\subsection{Proof of Theorem \ref{Hilb2} for general $\C$ fibered over $\Ver_{p}$.}

Let $\C$ now be a symmetric finite tensor category that admits a Verlinde fiber functor $F$. Extend $F$ canonically to a fiber functor $\C^{\ind} \rightarrow \Ver_{p}^{\ind}$.  Let $A$ be a finitely generated commutative ind-algebra in $\C$. Then, $F(A)$ is a finitely generated commutative ind-algebra in $\Ver_{p}$ (by Lemma \ref{funct:symm}). Similarly, if $M$ is a finitely generated $A$-module, then $F(M)$ is a finitely generated $F(A)$-module. Suppose we have an ascending chain

$$M_{0} \subseteq \cdots$$
of submodules of $M$. Then, for large enough $N$, the inclusion $F(M_{N}) \rightarrow F(M_{N+1})$ is an isomorphism. Hence, the cokernel of this morphism is $0$. But as $F$ is exact, the cokernel of this morphism is $F(\coker (M_{N} \rightarrow M_{N+1}))$. As $F$ is faithful, $M_{N} \rightarrow M_{N+1}$ must be an epimorphism and hence an isomorphism for large enough $N$. Hence, $M$ is a Noetherian $A$-module. Since this holds for all finitely generated $A$-modules $M$, $A$ is a Noetherian commutative algebra in $\C$. $\hfill \square$

\vspace{0.5cm}

\subsection{Proof of Corollary \ref{Hilb4}} 

The proof of Theorem \ref{Hilb2} for categories fibered over $\Ver_{p}$ immediately implies Corollary \ref{Hilb3}. We use this corollary to prove Corollary \ref{Hilb4}, i.e., that the Hilbert basis property holds for symmetric tensor categories fibered over symmetric tensor categories with finitely many isomorphism classes of simple objects, in which the Chevalley property holds. Let $\C'$ be such a category and let $\C$ be the symmetric tensor category (with the finiteness and Chevalley properties) that $\C'$ fibers over. Using the same argument as in the previous subsection, to show that $\C'$ has the Hilbert basis property it suffices to show that $\C$ has the Hilbert basis property.

Recall that the Chevalley property holds in a tensor category if the tensor product of simple objects is semisimple. Thus, $\C_{\text{ss}}$, the subcategory consisting of semisimple objects, is a symmetric tensor subcategory of $\C$. Additionally, the finite number of isomorphism classes of simple objects in $\C$ ensures that $\C_{\text{ss}}$ is a finite abelian category and is hence a symmetric fusion category. Hence, by Corollary \ref{Hilb3}, $\C_{\text{ss}}$ has the Hilbert basis property. 

To prove Corollary \ref{Hilb4} for $\C$, it again suffices to prove that $S(X)$ is Noetherian as an algebra, with $X$ an actual object in $\C$. Now, $X$ has a socle filtration

$$X = X_{m} \supseteq X_{m-1} \supseteq \cdots \supseteq X_{0} = 0$$
whose associated graded object $\gr X$ lies in $\C_{\text{ss}}$. Additionally, this socle filtration induces an ascending filtration on $S(X)$ and we have a canonical epimorphism

$$S(\gr X) \rightarrow \gr S(X).$$
Since $\C_{\text{ss}}$ is closed under the tensor product, $S(\gr X)$ is a finitely generated commutative ind-algebra in $\C_{\text{ss}}$. Thus, by Corollary \ref{Hilb3}, $S(\gr X)$ and thus $\gr S(X)$ are both Noetherian commutative algebras.

To now prove that $S(X)$ is Noetherian, it suffices to prove that $S(X) \otimes Y$ is a Noetherian $S(X)$-module for any actual object $Y$ in $\C$. Now, the filtration on $S(X)$ induces a filtration of $S(X)$-modules on $S(X) \otimes Y$. The associated graded module $\gr (S(X) \otimes Y)$ is equipped with a canonical epimorphism of $\gr S(X)$-modules

$$\gr (S(X))\otimes Y \rightarrow \gr (S(X) \otimes Y)$$
and hence the latter is finitely generated over $\gr S(X)$ and thus Noetherian as a $\gr S(X)$-module. But this implies that $S(X) \otimes Y$ is Noetherian as an $S(X)$-module, as desired. Hence, $S(X)$ is a Noetherian algebra and the Hilbert basis property holds for $\C$. 

\vspace{0.5cm}

\section{Proof of Finite Generation of Invariants for Fusion $\C$}

\subsection{A Reynolds' operator.} Let $\C$ be a symmetric fusion category. Then, $\C$ has the Hilbert basis property by Corollary \ref{Hilb3}. We will use this to prove finite generation of invariants for $\C$. The first step in the proof is a reduction to the graded case. This is done in exactly the same manner as in subsection 3.2 (the $\Ver_{p}$ case).

Let $A$ be a finitely geerated commutative algebra in $\C^{\ind}$. We now define a suitable projection from $A$ onto $A^{\inv}$. 

\begin{definition} If $A$ is a commutative ind-algebra in $\C$, a \textit{Reynolds' Operator} on $A$ is an $A^{\inv}$-module map $\rho : A \rightarrow A^{\inv}$ that is the identity on $A^{\inv}$. 
\end{definition}

Since $\C$ is a fusion category, then $A = A^{\inv} \oplus A^{\not=\mathbf{1}}$ and hence the canonical projection onto $A^{\inv}$ is a Reynolds' operator. 

\vspace{0.2cm}

Using the Reynolds' operator, we can now show that $A^{\inv}$ is Noetherian as a module over itself (but a priori not as an algebra, although this will be true as a consequence of finite generation and Corollary \ref{Hilb3} applied to $\C$.) Suppose we have an ideal $I$ in $A^{\inv}$. Let $AI$ be the ideal it generates in $A$. Then, it is immediate that $\rho(AI) = I$ as $AI = I \oplus AI^{\not=\mathbf{1}}$. Hence, as $AI$ (the image of $A \otimes I$ in $A$ under multiplication) is a finitely generated ideal of $A$ (Corollary \ref{Hilb3} and Lemma \ref{Noeth:finite} applied to $A$), $I = \rho(AI)$ is a finitely generated ideal. Hence, $A^{\inv}$ is Noetherian as a module over itself.

But $A^{\inv}$ is a trivial object, i.e., it is a direct sum of copies of $\mathbf{1}$, and hence can be viewed as an ordinary algebra in the category of vector spaces over $\k$. In particular, as we have reduced the problem to the case where $A$ was a $\Z_{\ge 0}$ graded algebra, $A^{\inv}$ is also a $\Z_{\ge 0}$ graded algebra. Thus, $A^{\inv}$ is a $\Z_{\ge 0}$ graded commutative ordinary algebra in the category of vector spaces that is Noetherian as a module over itself and therefore finitely generated as an algebra, as any set of generators of the ideal $A^{\inv}_{>0}$ also generates $A^{\inv}$ as an algebra. This proves the first part of Theorem \ref{Inv}, the finite generation of invariants, for symmetric fusion categories $\C$.

\vspace{0.1cm}

We next turn to the question of finiteness of an algebra over its invariants. This result will be independent of the techniques developed in this section and will hold for any $\C$ that has a Verlinde fiber functor.

\vspace{0.5cm}

\section{Proof of Theorem \ref{Inv} for General $\C$ Fibered over $\Ver_{p}$}

\subsection{} In this section, we relax the assumption that $\C$ is a fusion category and just assume that $\C$ is a symmetric finite tensor category fibered over $\Ver_{p}$. We will give a proof of Theorem $\ref{Inv}$ for such $\C$ that will also give a different proof of finite generation of invariants for fusion $\C$. 

Let $\C$ be a symmetric finite tensor category with a Verlinde fiber functor $F$. Let $A$ be a finitely generated commutative ind-algebra in $\C$. We wish to prove that $A$ is finitely generated as a module over $A^{\inv}$ and that $A^{\inv}$ is a finitely generated algebra. We do so by proving a different result: $F(A)^{\inv}$ is finitely generated as a module over $A'$, a finitely generated $\k$-subalgebra of $F(A^{\inv})$ (which we view as a $\k$-algebra as it is trivial). We subsequently show that this result implies Theorem \ref{Inv} for $\C$.

\vspace{0.5cm}

\subsection{} \textbf{A Frobenius operator.} We prove this result using the Frobenius endomorphism for characteristic $p$ commutative algebras. We begin by introducing some notation. For any $V \in \Ver_{p}$, we write

$$V = \sum_{i} V_{i} \otimes L_{i}$$
where where $L_{1}, \ldots, L_{p-1}$ are the simples in $\Ver_{p}$ and $V_{i} = \Hom(L_{i}, V)$ is a vector space giving the multiplicity. To simplify notation, we write

$$F(A) = \bigoplus_{i=1}^{p-1} A_{i} \otimes L_{i}.$$
Using the standard abuse of notation for trivial objects, we identify $A_{1}$ with $F(A)^{\inv}$. Since $F$ is a fiber functor from $\C$ to $\Ver_{p}$, we can identify $\C$ with the category of finite dimensional comodules over a commutative Hopf algebra $H$ in $\Ver_{p}$ (This is an actual object in $\Ver_{p}$ by finiteness of $\C$. See Remark \ref{Tannakian} for a reference). Then, we have a comodule morphism

$$\rho: F(A) \rightarrow F(A) \otimes H$$
which is a homomorphism of commutative ind-algebras in $\Ver_{p}$. Now, $F(A)^{\inv}$ is a commutative $\k$-algebra and hence we have a Frobenius ring homomorphism $x \mapsto x^{p}$ from $F(A)^{\inv}$ to itself. Let $A_{1}^{p}$ denote the image of this endomorphism. This is a commutative ind-algebra in the category of $\k$-vector spaces (as $\k$ is algebraically closed). Similarly, using the notation $H_{i} = \Hom(L_{i}, H)$ we introduced above, we define $H_{1}^{p}$ to be the image of the Frobenius endomorphism in $H_{1}$. Then, $H_{1}^{p}$ is a commutative $\k$-subalgebra of $H_{1}$. 

\vspace{0.2cm}

We claim that $H_{1}^{p}$ is a Hopf subalgebra of $H$. It is clear that $H_{1}^{p}$ is closed under the antipode (as the antipode is an algebra homomoprhism in $H$ and must preserve $H_{1}$, the $\mathbf{1}$-isotypic component of $H$). Thus, we just need to show that it is closed under comultiplication. Let $\Delta$ be the comultiplication morphism in $H$. We need to show that $\Delta(H_{1}^{p}) \subseteq H_{1}^{p} \otimes H_{1}^{p}$. Note that $\Delta$ gives us a homomorphism of $\k$-algebras

$$H_{1} \rightarrow (H \otimes H)_{1}$$
where

$$H \otimes H = \bigoplus_{i=1}^{p-1}(H \otimes H)_{i} \otimes L_{i}$$
Using the fusion rules for $\Ver_{p}$, we see that $\mathbf{1}$ is a subobject of $L_{i} \otimes L_{j}$ if and only if $i = j$, and then it has multiplicity $1$. Hence, 

$$(H \otimes H)_{1} = \bigoplus_{i=1}^{p-1} H_{i} \otimes H_{i}.$$
Let $x \in H_{1}^{p}$ with $x = y^{p}$. Since $\Delta$ is an algebra homomorphism, we see that if 

$$\Delta(y) = \sum_{i=1}^{p-1}\sum_{j=1}^{n_{i}} y_{ij} \otimes y'_{ij}$$
where $y_{ij}, y'_{ij}$ are elements in $H_{i}$, then

$$\Delta(x) = \sum_{j} y_{ij}^{p} \otimes (y'_{ij})^{p}.$$
But now, for each $i$, $y_{ij} \otimes y'_{ij}$ is an element in a copy of $\mathbf{1}$ sitting inside $L_{i} \otimes L_{i} \subseteq H$ and hence $y_{ij}^{p} \otimes (y'_{ij})^{p}$ is an element in a copy of $\mathbf{1}$ sitting inside the image of the corresponding componentwise multiplication morphism $S^{p}(L_{i}) \otimes S^{p}(L_{i}) \rightarrow H \otimes H$. In the proof of Proposition \ref{Ver:finite}, we showed that $S^{n}(L_{i})$ is $0$ for any $i > 1$ if $n \ge p$. Hence, for $i > 1$, $y_{ij}^{p} \otimes (y'_{ij})^{p} = 0$ and thus,

$$\Delta(H_{1}^{p}) \subseteq H_{1}^{p} \otimes H_{1}^{p}.$$
Thus, $H_{1}^{p}$ is a commutative Hopf algebra in the category of $\k$-vector spaces. Exactly the same argument also shows that $A_{1}^{p}$ is a comodule for $H_{1}^{p}$, where the comodule morphism is obtained by restricting the $H$-comodule morphism associated to $A$. 

\vspace{0.2cm}

Since $F(A)$ is a finitely generated commutative ind-algebra in $\Ver_{p}$, Theorem \ref{Inv} for $\Ver_{p}$ shows that $A_{1} = F(A)^{\inv}$ is a finitely generated commutative ind-algebra in $\Vec \subseteq \Ver_{p}$, and thus so is $A_{1}^{p}$. Additionally, $F(A)^{\inv}$ is integral over $A_{1}^{p}$ and is hence a finite module over $A_{1}^{p}$. Let $A'$ be the invariants of $A_{1}^{p}$ under the coaction of $H_{1}^{p}$. Because the $H_{1}^{p}$-comodule morphism of $A_{1}^{p}$ is defined by restricting the $H$-comodule morphism of $A$ to $A_{1}^{p}$, $A' \subseteq F(A^{\inv})$. Also, as $H_{1}^{p}$ is an finite dimensional commutative Hopf algebra in the category of $\k$-vector spaces, and $A_{1}^{p}$ is a finitely generated commutative algebra in $\Rep(H_{1}^{p})^{\ind}$, a classical theorem of Demazure and Gabriel \cite[Ch. III, \textsection 2, 6.1]{DG} now states that $A_{1}^{p}$ is finitely generated as a module over $A'$, which is a finitely generated algebra in $\Rep(H_{1}^{p})^{\ind}$.

Hence, as $F(A)^{\inv}$ is finite as a module over $A_{1}^{p}$, $F(A)^{\inv}$ is finitely generated as a module over $A'$, a finitely generated $\k$-subalgebra of $F(A^{\inv})$. We now use this to imply Theorem \ref{Inv} for $\C$. The classical Hilbert basis theorem for commutative $\k$-algebras implies that $F(A)^{\inv}$ is a Noetherian module over $A'$. Thus,  $F(A^{\inv})$ is a finitely generated module over $A'$ and is hence a finitely generated commutative $\k$-algebra. Also, as $F(A)^{\inv}$ is finite as a module over $A'$, it is finite over $F(A^{\inv})$. Hence, by Theorem \ref{Inv} for $\Ver_{p}$, $F(A)$ is finite as a module over $F(A)^{\inv}$ and is thus also finite over $F(A^{\inv})$. 

\vspace{0.2cm}

We show that this implies that $A$ is finite as a module over $A^{\inv}$. Let $X$ be a subobject of $F(A)$ in $\Ver_{p}$ such that the canonical multiplication map

$$F(A^{\inv}) \otimes X \rightarrow F(A)$$
is an epimorphism. Since $A^{\inv}$ is an ind-object in $\C$, it is the sum of its subobjects. Hence, there exists some subobject $Y$ of $A^{\inv}$ in $\C$ such that $F(Y)$ contains $X$. Thus, if $\eta$ is the natural multiplication morphism 

$$A^{\inv} \otimes Y \rightarrow A$$
in $\C$, then 

$$F(\eta) : F(A^{\inv} \otimes Y) \cong F(A) \otimes F(Y) \rightarrow F(A)$$
is an epimorphism. This implies that $\coker F(\eta) = 0$, which, by exactness of $F$, implies that $F(\coker \eta) = 0$. As $F$ is faithful, this implies that $\coker \eta = 0$ and hence $\eta$ is an epimorphism. Thus, $A$ is a finite module over $A^{\inv}$. Additionally, since $F(A^{\inv})$ is finitely generated commutative algebra, we can find a subobject $X$ of $F(A^{\inv})$ in $\Ver_{p}$ that generates it as an ind-algebra in $\Ver_{p}$. Choosing a subobject $Y$ of $A^{\inv}$ in $\C$ such that $F(Y)$ contains $X$, a similar argument to the one above shows that $Y$ generates $A^{\inv}$ as an ind-algebra in $\C$. Thus, $A^{\inv}$ is finitely generated, which proves Theorem \ref{Inv} for $\C$.

\vspace{0.5cm}

\section{Proof of Theorem \ref{Inv2}}

Let $\k$ now be an algebraically closed field of characteristic $2$ and consider the category $\sVec_{2}$ defined as in the introduction. To recall, $\sVec_{2}$ was the category of representations of the Hopf algebra $D = \k[d]/(d^{2})$ (with $d$ primitive) equipped with $R$-matrix

$$1 \otimes 1 + d \otimes d.$$ 
Let $\C$ be a symmetric finite tensor category fibered over $\sVec_{2}$. Our goal is to show that finitely generated commutative ind-algebras in $\C$ have finitely generated invariants and are finite over their invariants.

Note that commutative algebras in $\sVec_{2}$ are $\k$-algebras $A$ equipped with a derivation $d$, with $d^{2} = 0$, such that for any $a, b \in A$,

$$[a, b] = d(a)d(b).$$
We call this property $d$-commutativity. When we say that $A$ is commutative, we will mean commutative as a $\k$-algebra. We will specifically use the term $d$-commutativity when we refer to algebras that are commutative in $\sVec_{2}$.

Now, as $\C$ is a symmetric finite tensor category fibered over $\sVec_{2}$, it is the category of comodules of a $d$-commutative Hopf algebra $H$ in $\sVec_{2}$ (Remark \ref{Tannakian} again). Let $A$ be a finitely generated $d$-commutative algebra in $\C$, which we can view as a $d$-commutative $H$-comodule algebra in $\sVec_{2}$. To prove Theorem \ref{Inv2}, we need to show that $A^{\inv}$ is finitely generated and $A$ is a finite $A^{\inv}$-module. 

\vspace{0.2cm}

We first prove that 

$$A^{4} := \{a^{4} : a \in A\}$$
is a commutative $H$-comodule subalgebra of $A$ that is contained in $\ker(d)$. The fact that $A^{4} \subseteq \ker(d)$ is trivial, as $d(a^{4}) = 4a^{3}d(a) = 0.$ In addition, since $A$ is $d$-commutative, this implies that elements in $A^{4}$ commute with each other. Hence, it suffices to prove that $A^{4}$ is a $\k$-algebra and $\Delta(A^{4}) \subseteq A^{4} \otimes H$, where $\Delta : A \rightarrow A \otimes H$ is the comodule structure morphism. In fact, we will show that $\Delta(A^{4}) \subseteq A^{4} \otimes H^{4}$.

We prove that $A^{4}$ is a $\k$-algebra, i.e., that is is closed under sums and products. First, note that for any $a \in A$, 

$$0 = [a, a] = d(a)^{2}.$$
Now, let $a, b \in A$. Then, by $d$-commutativity of $A$,

$$(ab)^{2} = a^{2} b^{2} + ab\,d(a)d(b).$$
Thus, using $d$-commutativity and the fact that $d(a)^{2} = d(b)^{2} = 0$,

$$(ab)^{4} = a^{2}b^{2}a^{2}b^{2} = a^{4}b^{4}.$$
Let $a_{1}, \ldots, a_{n} \in A$. Then, 

$$(a_{1} + \cdots + a_{n})^{2} = \sum_{i} a_{i}^{2} + \sum_{i < j} d(a_{i})d(a_{j}).$$
Every term in the above sum is in the kernel of $d$ (as $d^{2} = 0$) and is hence central in $A$. Thus, again using the fact that $d(a_{i})^{2} = 0$

$$(a_{1} + \cdots + a_{n})^{4} = \sum_{i} a_{i}^{4}.$$
Hence, we see that $A^{4}$ contains the unit and is closed under sums and products. Thus, $A^{4}$ is a $\k$-subalgebra of $A$. 

\vspace{0.2cm}

We next show that $\Delta(A^{4}) \subseteq A^{4} \otimes H^{4}$. Since $A \otimes H$ is also a $d$-commutative algebra, our computation above regarding sums of fourth powers works here as well. Hence, for arbitrary $a \in A$, if

$$\Delta(a) = \sum_{i} a_{i} \otimes h_{i},$$
then

$$\Delta(a^{4}) = (\Delta(a))^{4} = \sum_{i} a_{i}^{4} \otimes h_{i}^{4}.$$
This shows that $\Delta(A^{4}) \subseteq A^{4} \otimes H^{4}$. By the same computation as in the case of $A$, $H^{4}$ is a commutative $\k$-algebra. In fact, since the comultiplication in $H$ is the $H$-comodule map for $H$, using the same computation as for $\Delta$ above, we see that $H^{4}$ is closed under comultiplication. Finally, if $S$ is the antipode, and $h \in H$, then it is clear that

$$S(h^{4}) = S(h)^{4}$$
and hence $H^{4}$ is closed under the antipode as well. Thus, $H^{4}$ is a commutative Hopf algebra over $\k$ and $A^{4}$ is a commutative $H^{4}$-comodule algebra. Hence, since $A^{4}$ is finitely generated (because $A$ is finitely generated), by \cite[Ch. III, \textsection 2, 6.1]{DG}, $(A^{4})^{\inv}$ is finitely generated over $\k$ and $A^{4}$ is finite over $(A^{4})^{\inv}$. 

Thus, since $A$ is finite as a module over $A^{4}$, it is finite as a module over $(A^{4})^{\inv} \subseteq A^{\inv}$. Since $(A^{4})^{\inv}$ is a finitely generated commutative $\k$-algebra, the classical Hilbert basis theorem implies that $A$ is a Noetherian $(A^{4})^{\inv}$-module. Thus, as

$$(A^{4})^{\inv} \subseteq A^{\inv} \subseteq A$$
$A^{\inv}$ is a finitely generated $(A^{4})^{\inv}$-module and hence a finitely generated $\k$-algebra. Additionally, as $A$ is finite over $(A^{4})^{\inv}$, $A$ is finite over $A^{\inv}$. This proves Theorem \ref{Inv2} for $\C$.

\bibliographystyle{alphanum}

\bibliography{Hilbert-Basis}

\end{document}